\def\namedlabel#1#2{\begingroup
#2%
\def\@currentlabel{#2}%
\phantomsection\label{#1}\endgroup
}
\theoremstyle{theorem} 
\newtheorem{theorem}{Theorem}[section]
\newtheorem{corollary}[theorem]{Corollary}
\newtheorem{observation}[theorem]{Observation}
\newtheorem{lemma}[theorem]{Lemma}
\newtheorem{proposition}[theorem]{Proposition}
\newtheorem{notation}[theorem]{Notation}
\newtheorem*{convention}{Convention}
\newtheorem{theoremx}{Theorem}
\theoremstyle{definition} 
\newtheorem{definition}[theorem]{Definition}
\newtheorem{example}[theorem]{Example}
\newtheorem{remark}[theorem]{Remark}
\numberwithin{equation}{subsection}
\renewcommand{\(}{\left(}
\newcommand{\NN}{\mathbb{N}}
\newcommand{\RR}{\mathbb{R}}
\newcommand{\ZZ}{\mathbb{Z}}
\newcommand{\PP}{\mathbb{P}}
\newcommand{\CC}{\mathbb{C}}
\newcommand{\cA}{\mathscr{A}}
\newcommand{\IN}{\operatorname{in}}
\def\@tocline#1#2#3#4#5#6#7{\relax
  \ifnum #1>\c@tocdepth 
  \else
    \par \addpenalty\@secpenalty\addvspace{#2}%
    \begingroup \hyphenpenalty\@M
    \@ifempty{#4}{%
      \@tempdima\csname r@tocindent\number#1\endcsname\relax
    }{%
      \@tempdima#4\relax
    }%
    \parindent\z@ \leftskip#3\relax \advance\leftskip\@tempdima\relax
    \rightskip\@pnumwidth plus4em \parfillskip-\@pnumwidth
    #5\leavevmode\hskip-\@tempdima
      \ifcase #1
       \or\or \hskip 1.9em \or \hskip 2em \else \hskip 3em \fi%
      #6\nobreak\relax
    \dotfill\hbox to\@pnumwidth{\@tocpagenum{#7}}\par
    \nobreak
    \endgroup
  \fi}
\newcommand{\Spec}{\operatorname{Spec}}
\newcommand{\Frac}{\operatorname{Frac}}
\newcommand{\Tor}{\operatorname{Tor}}
\newcommand{\Min}{\operatorname{Min}}
\newcommand{\Ext}{\operatorname{Ext}}
\newcommand{\Proj}{\operatorname{Proj}}
\newcommand{\Ann}{\operatorname{Ann}}
\newcommand{\Ht}{\operatorname{ht}}
\newcommand{\height}{\operatorname{ht}}
\newcommand{\p}{\mathfrak{p}}
\newcommand{\q}{\mathfrak{q}}
\renewcommand{\a}{\mathfrak{a}}
\newcommand{\m}{\mathfrak{m}}
\newcommand{\n}{\mathfrak{n}}
\newcommand{\ara}{\operatorname{ara}}
\newcommand{\cdim}{\operatorname{c}}
\definecolor{blue-violet}{rgb}{0.54, 0.17, 0.89}
\definecolor{Blue}{rgb}{0.01, 0.28, 1.0}
\definecolor{gGreen}{rgb}{0.2, 0.8, 0.2}
\definecolor{Green}{rgb}{0.04, 0.85, 0.32}
\begin{document}

\title[Connectedness of square-free Groebner Deformations]{Connectedness of square-free Groebner Deformations}

\author[L. Alan\'is-L\'opez]{Lilia Alan\'is-L\'opez}
\address{Departamento de Matem\'aticas\\
Universidad Aut\'onoma de Nuevo Le\'on, M\'exico} \email{lilia.alanislpz@uanl.edu.mx}

\author[L. N\'u\~{n}ez-Betancourt]{Luis N\'u\~{n}ez-Betancourt{$^1$}}
\address{Departamento de Matem\'aticas, Centro de Investigaci\'on en Matem\'aticas, M\'exico}
\email{luisnub@cimat.mx}

\author[P. Ram\'irez-Moreno]{Pedro Ram\'irez-Moreno{$^2$}}
\address{Departamento de Matem\'aticas\\
Universidad Aut\'onoma de Nuevo Le\'on, M\'exico}
\email{pedro.ramirezmrn@uanl.edu.mx}

\thanks{{$^1$}This author was partially supported by  CONACyT Grant 284598 and C\'atedras Marcos Moshinsky}
\thanks{{$^2$}This author was partially supported by CONACyT Grants  706865 and 284598.}

\subjclass[2010]{Primary 13E05, 05E40, 13C15, 13H99, 13J10; Secondary 13B25,13P10.}
\keywords{Connectedness Dimension; Noetherian Equidimensional Complete Local Rings; Gr\"{o}bner Bases; Monomial Orders; Non zero divisors.}

\maketitle

\begin{abstract}
Let $I\subseteq S=K[x_1,\ldots,x_n]$ be a homogeneous ideal equipped with a monomial order $<$.
We show that if $\IN_{<}(I)$ is a square-free monomial ideal, then $S/I$ and $S/\IN_<(I)$ have the same connectedness dimension. We also show that graphs related to connectedness of these quotient rings have the same number of components. 
We also provide consequences regarding Lyubeznik numbers.
We obtain these results by furthering the study of  connectedness modulo a  parameter 
in a  local ring.

\end{abstract}

\setcounter{tocdepth}{1}
\tableofcontents


Let $I\subseteq S=K[x_1,\ldots,x_n]$ be a homogeneous ideal in a standard graded polynomial ring over a field. 
A theme of research is to obtain properties of $I$ from the monomial ideal $\IN_<(I)$. For instance, it is well known that these two ideals have the same Hilbert function, and so, the same dimension.

In this work we focus on the connectedness dimension, $\cdim(X)$, of an algebraic variety, $X$. This number measures how connected $X$ is.
Explicitly,
$$
\cdim(X)=\min\{ \dim(Z)\;|\; Z\hbox{ is closed and }X\setminus Z \hbox{ is diconnected}\}.
$$
This line of research was initiated by 
 Varbaro \cite{MV09}, who  showed that  $c(S/\IN_<(I))\geq c(S/I)$ (see also \cite{KS}).
If $X=\Spec(R)$ is an equidimensional affine variety, there is a sequence of graphs, $\Gamma_t(R)$, that detect $c(X)$ (see Definition \ref{DefGraph}). Explicitly,
$$
\# \Gamma_t(R)=\max\{ \#(X\setminus Z)\; |\; Z\hbox{ is a closed subvariety}\},
$$
where $\#$ denotes the number of connected components \cite{NBSW}.
We recall that $\Gamma_1(R)$ is called the dual graph \cite{HCT} or  Hochster-Huneke graph \cite{HH} of $R$. 
Similarly, $\Gamma_t(R)$ played a role in the study of vanishing of local cohomology \cite{HVT,HL,O}.

Recently, Conca and Varbaro \cite{CV} showed that if $\IN_<(I)$ is a square-free monomial ideal  more properties of $I$ are reflected in $\IN_<(I)$. In particular, they showed that the $a$-invariants and extremal Betti numbers of $\IN_<(I)$ and $I$ are equal. In this work, we show that if $\IN_<(I)$ is a square-free monomial ideal, then  $c(S/\IN_<(I))=c(S/I)$.

\begin{theoremx}[{Theorem \ref{ThmGroeb}}]\label{ThmMainGroeb}
Let $S=K[x_1,\ldots,x_n]$ be a polynomial ring over a field.
Let $I$ be an  equidimensional ideal and $<$ a monomial order such that $\dim S/I\geq 2$ and  $\IN_<(I)$ is square-free.
Then, 
$$
\#\Gamma_t(S/I)=\#\Gamma_t(S/\IN(I)).
$$
As a consequence,
$\cdim(S/I)=\cdim(S/\IN(I))$.
\end{theoremx}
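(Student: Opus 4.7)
The plan is to reduce Theorem A to a local comparison across a nonzerodivisor, via a standard Gröbner degeneration, and then apply what I expect to be the paper's main technical engine on ``connectedness modulo a parameter''.

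First, I would set up the degeneration. Pick a weight vector $w \in \ZZ^n_{\geq 0}$ representing $<$ on $I$, so that $\IN_w(I) = \IN_<(I)$, and let $\widetilde{I} \subseteq S[y]$ be the $w$-homogenization (using $y$ for the deformation parameter to avoid collision with the graph index $t$). Then $T := S[y]/\widetilde{I}$ is a free $K[y]$-module with
$$
T/yT \;\cong\; S/\IN_<(I), \qquad T_y \;\cong\; (S/I)[y, y^{-1}].
$$
Localizing at the homogeneous maximal ideal $(\m, y)$, where $\m = (x_1,\ldots,x_n)$, yields a local ring $(R, \n)$ in which $y$ is a nonzerodivisor (by flatness), $R/yR$ is the local ring of $S/\IN_<(I)$ at $\m$, and $R_y$ is the local ring of $(S/I)[y, y^{-1}]$ at $(\m, y)$. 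The square-freeness of $\IN_<(I)$ makes $R/yR$ reduced, and flatness transports equidimensionality of $I$ to equidimensionality of $\IN_<(I)$, hence of $R/yR$ and of $R$ itself.

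Second, I would invoke a local comparison of the form: if $(R,\n)$ is an equidimensional local ring and $y \in \n$ is a nonzerodivisor such that $R/yR$ is reduced and equidimensional, then $\#\Gamma_t(R/yR) = \#\Gamma_t(R_y)$ for each relevant $t$. The direction $\#\Gamma_t(R/yR) \geq \#\Gamma_t(R_y)$ is the semicontinuity-type inequality in the spirit of Varbaro's bound $\cdim(S/\IN_<(I)) \geq \cdim(S/I)$; the converse is the genuinely new content and is where the reducedness of the special fiber is essential, since without it spurious extra components can appear at $y=0$ and break the balance.

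Third, I would pass from the local equality back to the graded statement. On the special-fiber side, the graph $\Gamma_t$ of a graded equidimensional ring coincides with that of its localization at the irrelevant ideal, because the minimal primes and the pairwise height conditions are insensitive to this localization. On the generic-fiber side, $R_y$ is a localization of the Laurent extension $(S/I)[y,y^{-1}]$; contraction gives a bijection of minimal primes, and the height conditions defining $\Gamma_t$ are preserved up to a uniform shift by one coming from the extra transcendental $y$. Chaining $\#\Gamma_t(S/\IN_<(I)) = \#\Gamma_t(R/yR) = \#\Gamma_t(R_y) = \#\Gamma_t(S/I)$ delivers the first assertion, and the equality of connectedness dimensions follows from the identification of $\cdim$ in terms of $\#\Gamma_t$ recalled in the introduction.

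The main obstacle is the second step: establishing the nontrivial inequality $\#\Gamma_t(R/yR) \leq \#\Gamma_t(R_y)$ through the reducedness of the special fiber is where all the geometric content of the square-free hypothesis must be used. A secondary technical point is bookkeeping the one-dimensional shift incurred when replacing $S/I$ by its Laurent extension, which is why the hypothesis $\dim S/I \geq 2$ enters: it ensures the relevant $t$-range is nonempty and that $\Gamma_t$ remains meaningful on both sides of the degeneration.
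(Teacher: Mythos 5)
Your plan follows the paper's own route for Theorem~\ref{ThmGroeb}: realize $\IN_<(I)$ as $\IN_w(I)$ for a weight vector, pass to the $w$-homogenization $R=T/I^w$ with $R/yR\cong S/\IN_<(I)$, localize and complete at the irrelevant maximal ideal (the paper's Remark~\ref{RemGrad} checks this changes no minimal primes or heights --- a step you should not skip, since the local engine is stated for \emph{complete} local rings), and invoke the connectedness-modulo-a-parameter theorem using that the special fibre is reduced and $y$ is a nonzerodivisor. The one substantive discrepancy is in your second step: the paper's engine (Theorem~\ref{ThmGraphParameter}) compares $\Gamma_t(R/yR)$ with $\Gamma_t(R)$, not with $\Gamma_t(R_y)$. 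These differ by a genuine claim you have absorbed into ``routine bookkeeping'': that no edge of $\Gamma_t(R)$ lives only over the special fibre, i.e.\ that $\Ht(\p_i^w+\p_j^w)=\Ht(\p_i+\p_j)$ for the homogenized minimal primes. A priori $V(\p_i^w)\cap V(\p_j^w)$ could have excess components inside $V(y)$ that lower the height, and the paper spends the second half of its proof ruling this out by sandwiching $\Ht(\p_i^w+\p_j^w)$ between $\Ht\bigl((\p_i+\p_j)^w\bigr)$ from above and $\Ht\bigl(\p_i^w+\p_j^w+(y-1)\bigr)-1$ from below. Two smaller corrections: there is no ``uniform shift by one'' when passing to $(S/I)[y,y^{-1}]$ --- extension of ideals along $A\to A[y,y^{-1}]$ preserves height and $\Gamma_t$ is defined by absolute heights, so the index $t$ matches on the nose; the hypothesis $\dim S/I\geq 2$ enters only to guarantee $\dim R\geq 3$ and $1\leq t\leq \dim R-2$, the range of the local theorem. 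Also, you have the easy and hard inequalities reversed: the direction that holds without reducedness is that components can only merge in the special fibre, i.e.\ $\#\Gamma_t(R/yR)\leq\#\Gamma_t(R)\leq\#\Gamma_t(R_y)$; the new content, where $(y)$ radical is used (via the Tor argument showing $y$ is a nonzerodivisor on $R/(I+J)$ for each partition of $\Min(R)$), is that no merging occurs.
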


Theorem \ref{ThmMainGroeb} extends previous work of Nadi and Varbaro \cite{NV} in which they showed that 
$\Gamma_t(S/I)$ is connected if and only if $\Gamma_t(S/\IN(I))$ is connected. We show that, under the conditions in Theorem \ref{ThmMainGroeb},  $\#\Gamma_t(S/I)$ does not change after field extensions   (see Corollary \ref{CorField}).

The graphs $ \Gamma_t(R)$ have played an important role in the study of Lyubeznik numbers \cite{LSI,NBSW,W,Z}, which are numerical invariants obtained from local cohomology \cite{L}. We refer to a survey on Lyubeznik numbers for more information about this topic \cite{SLN}. As a consequence of Theorem \ref{ThmMainGroeb}, we are able show that certain Lyubeznik numbers of $S/I$ and $S/\IN_<(I)$ are equal if $\IN_<(I)$ is square-free (see Theorem \ref{ThmLyuNum}).

The main technique to show Theorem \ref{ThmMainGroeb} is the study of connectedness under a parameter ideal. 
In fact, Theorem \ref{ThmMainGroeb} is an application of the following result.

\begin{theoremx}[{Theorem \ref{ThmGraphParameter}}]\label{thm:Main2}
Let $(R,\mathfrak m)$ be an equidimensional complete local ring containing a field of $\dim(R) = d \geq 3$, with separably closed residue field. Suppose there exists $x \in \mathfrak m$ such that $x$ is a non zero divisor of $R$ and $(x)$ is a radical ideal. Let $t$ be an integer such that  $1\leq t\leq d-2$. Then,
$$ \#\Gamma_t(R) = \#\Gamma_t(R/(x)).$$
As a consequence,
$$ \cdim(R) = \cdim(R/(x)) + 1.$$
\end{theoremx}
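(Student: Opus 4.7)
\emph{Setup.} Since $R$ is complete local (hence catenary) and equidimensional, and $x$ is a non-zero-divisor with $(x)$ radical, the quotient $R/(x)$ is equidimensional of dimension $d-1 \geq 2$; its minimal primes correspond bijectively to the height-one primes $\wp_1, \dots, \wp_r$ of $R$ minimal over $(x)$ (via Krull's principal ideal theorem). The separably closed residue field is preserved.

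\emph{Vertex and edge comparison.} I would define a map $\Phi$ from the vertices of $\Gamma_t(R/(x))$ to the vertices of $\Gamma_t(R)$ by choosing, for each $\wp_k$, a minimal prime $P_{\alpha(k)} \subseteq \wp_k$. Every minimal prime $P$ of $R$ is contained in some $\wp_k$ (apply Krull in $R/P$ to the image of $x$), so $\Phi$ meets every connected component of $\Gamma_t(R)$. The heart of the argument is to show that $\Phi$ descends to a bijection on components. One direction --- edges of $\Gamma_t(R/(x))$ give same-component pairs in $\Gamma_t(R)$ --- reduces to a height inequality: $\Ht((\wp_k+\wp_\ell)/(x)) \leq t$ yields $\Ht(\wp_k+\wp_\ell) \leq t+1$ in $R$, and hence $\Ht(P_{\alpha(k)}+P_{\alpha(\ell)}) \leq t+1$; the extra unit of height is then absorbed using the ``connectedness modulo a parameter'' machinery developed earlier in the paper.

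\emph{The lifting step.} The opposite direction is the crux: given an edge $\{P, P'\}$ of $\Gamma_t(R)$, so $\dim R/(P+P') \geq d-t \geq 2$, I need to produce a path in $\Gamma_t(R/(x))$ joining some $\wp$ above $P$ to some $\wp'$ above $P'$. The plan is to apply a Grothendieck-type connectedness theorem to the complete local ring $R/(P+P')$ and the element $\bar x$: the punctured spectrum of $R/(P+P'+(x))$ is then connected, and its minimal primes produce the desired chain of height-one primes realizing a path in $\Gamma_t(R/(x))$. This is the main obstacle, and it is precisely where the hypotheses $t \leq d-2$ and the separably closed residue field are used --- the former so that the relevant complete local ring has dimension $\geq 2$ (necessary for Grothendieck), the latter to prevent spurious components from appearing after the implicit passage to a suitable henselization or scalar extension at the residue field.

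\emph{Consequence for $\cdim$.} With the bijection of components in hand, the identity $\cdim(R) = \cdim(R/(x)) + 1$ follows by identifying $\cdim$ with a threshold in the sequence $(\#\Gamma_t)_t$ and observing that the extra $1$ comes from the ambient dimension dropping from $d$ to $d-1$ on passing to $R/(x)$, while the $\#\Gamma_t$-values themselves are preserved across the range of relevant $t$.
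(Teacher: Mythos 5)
Your overall architecture matches the paper's: compare components of $\Gamma_t(R)$ and $\Gamma_t(R/(x))$ via the correspondence between minimal primes of $R$ and the height-one primes over $(x)$, and use Grothendieck-type connectedness modulo the parameter for the ``lifting'' direction (the paper packages this as Theorem \ref{thm1} and Corollary \ref{cor:ConnectedImpSigmaSigma'}, applied to $R/(P\cap P')$ rather than to $R/(P+P')$ as you suggest --- the former is the right object, since its minimal primes over $(x)$ are exactly $D(P)\cup D(P')$, but this is a repairable slip).

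The genuine gap is in the direction you describe as ``edges of $\Gamma_t(R/(x))$ give same-component pairs in $\Gamma_t(R)$.'' Your height computation only yields $\Ht(P_{\alpha(k)}+P_{\alpha(\ell)})\leq t+1$, and there is no ``connectedness modulo a parameter machinery'' that absorbs the extra unit for that particular pair: $t+1$ is genuinely weaker than the $\leq t$ needed for an edge, and the bound $t+1$ can be attained. The correct argument works at the level of a whole component $\Sigma_i$ versus its complement: setting $I=\bigcap_{\p\in\Sigma_i}\p$ and $J=\bigcap_{\q\notin\Sigma_i}\q$, one shows the \emph{strict} inequality $\Ht(I+J)+1\leq\Ht(\p_1+\p_2+(x))\leq\Ht(\q_1+\q_2)$, which then descends to $\Ht(I+J)\leq t$ and contradicts maximality of the component. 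The strictness comes from the fact that $x$ lies in no minimal prime of $I+J$, which is the content of the paper's Lemma \ref{lem:Tor}: a $\Tor$ long exact sequence argument showing $x$ is a non-zero-divisor on $R/(I+J)$, in which the hypothesis that $(x)$ is \emph{radical} is used to prove $I\cap J+(x)=(I+(x))\cap(J+(x))$. Your proposal never invokes either the non-zero-divisor hypothesis or the radicality of $(x)$ at this step, yet this is exactly where they are indispensable (without them the implication $\Gamma_t(R/(x))$ connected $\Rightarrow$ $\Gamma_t(R)$ connected fails). Until you supply an argument of this kind, the proof is incomplete at its central point. (Minor remark: the separably closed residue field is not actually needed, as the paper notes before Theorem \ref{thm1}.)
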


Theorem \ref{thm:Main2} is on the same line of research studied by Spiroff, Witt, and the second-named author \cite{NBSW}, where different conditions were imposed on the parameter to obtain the equality of connectedness dimension.

\begin{convention}
All rings in this manuscript are commutative Noetherian with one.
\end{convention}

\section{Graphs and Connectedness Dimension}
We start by recalling the  construction  of certain  graphs that detect the connectedness of a variety or a ring.

\begin{definition}[{\cite{NBSW}}]\label{DefGraph}
Let $R$ be a  ring of dimension $d$ and let $t$ be an integer such that  $0\leq t\leq d$. We define the graph $\Gamma_t(R)$ as a simple graph whose vertices are the minimal primes of $R$ and there is an edge between $\p$ and $\q$ distinct minimal primes if and only if $\Ht(\p + \q ) \leq t$.
\end{definition}

We note that $\Gamma_s(R)$ is a subgraph of $\Gamma_t(R)$ for every $s$ and $t$ such that $0 \leq s \leq t \leq d$.  As a consequence, if $\Gamma_s(R)$ is connected, then $\Gamma_t(R)$ is also connected. We observe that $\Gamma_0(R)$ is connected if and only if $R$ has only one minimal prime.  In addition,  $\Gamma_d(R)$ is  connected if $R$ is local.

Connectedness dimension is a ring invariant and one of our main objects of study. We define it in the following way: 

\begin{definition}
Let $R$ be a ring. We define the connectedness dimension of $R$, and denote it as $\cdim(R)$, as 
$$ \cdim(R) = \min\Set{ \dim (R/I) | \Spec(R) - V(I) \text{ is disconnected }} $$
We take the convention that the empty set is disconnected.
\end{definition}

\begin{theorem}\label{Grothendieck}
Let $R$ be an equidimensional complete local ring. Let $I$ be a proper ideal of $R$. Then
$$ \cdim(R/I) \geq \min\Set{\cdim(R), \dim(R) - 1} - \ara(I) $$ 
\end{theorem}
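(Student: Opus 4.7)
The plan is to prove the bound by induction on $r = \ara(I)$. For the base case $r = 0$, the ideal $I$ is nilpotent, so $\Spec(R/I) = \Spec(R)$ as topological spaces; thus $\cdim(R/I) = \cdim(R) \geq \min\{\cdim(R), \dim R - 1\}$, which is exactly the required inequality.

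For the inductive step, I use that $\cdim$ depends only on the radical: write $\sqrt{I} = \sqrt{(f_1,\ldots,f_r)}$ with $r = \ara(I)$ and consider the chain $R \supseteq R/(f_1) \supseteq \cdots \supseteq R/(f_1,\ldots,f_r)$. The key reduction is to a single-hyperplane-section statement, valid for an arbitrary complete local ring $A$ and any $g \in \m_A$:
$$ \cdim(A/(g)) \geq \min\{\cdim(A), \dim(A) - 1\} - 1. $$
Granting this, one applies it iteratively along the chain; at each step the invariant $\min\{\cdim(\cdot), \dim(\cdot) - 1\}$ decreases by at most one, yielding $\cdim(R/I) \geq \min\{\cdim(R), \dim R - 1\} - r$ after $r$ steps. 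The passage from $\ara(I)$ generators to a single element at a time is the reason we only ever need a principal-ideal estimate.

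The main obstacle is establishing the hyperplane-section inequality above, which is the classical Grothendieck connectedness theorem. I would pursue either of two standard routes. The first is a direct topological argument: given an ideal $K \supseteq (g)$ of $A$ whose vanishing locus disconnects $\Spec(A/(g))$, lift the resulting decomposition $V(K+(g)) = Y_1 \cup Y_2$ back to $\Spec(A)$ and observe that $V(K)$ together with one of the $Y_i$ already provides a disconnection of $\Spec(A)$ whose dimension differs from $\dim(A/K)$ by at most one. The second route is via local cohomology: Hartshorne's connectedness criterion relates connectedness of $\Spec(A)\setminus V(L)$ to vanishing properties of $H^i_L(A)$, and a Mayer--Vietoris argument then tracks how this vanishing transports under the quotient by $(g)$. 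The delicate point throughout is that $A$, and each intermediate quotient $R/(f_1,\ldots,f_i)$, need not remain equidimensional after the first hyperplane section; the invariant $\min\{\cdim(\cdot), \dim(\cdot) - 1\}$ is precisely what remains robust under such hyperplane sections, and is why the statement is phrased with this $\min$ rather than with $\cdim(R)$ alone.
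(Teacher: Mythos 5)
The paper itself gives no proof of this statement: it is the classical Grothendieck connectedness theorem (SGA2; see also Brodmann--Sharp, \emph{Local Cohomology}, Theorem 19.2.11), recorded as background, so there is no internal argument to compare against. Judged on its own terms, your reduction is the standard one and is sound as far as it goes: the base case $\ara(I)=0$ is correct, replacing $I$ by $(f_1,\dots,f_r)$ with $\sqrt{I}=\sqrt{(f_1,\dots,f_r)}$ is legitimate because $\cdim$ depends only on the underlying topological space, and your observation that the quantity $\min\{\cdim(\cdot),\dim(\cdot)-1\}$ drops by at most one under a principal quotient is exactly what makes the induction close, since $\dim(A/(g))-1\ge\dim(A)-2\ge\min\{\cdim(A),\dim(A)-1\}-1$. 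You are also right that the intermediate quotients need not stay equidimensional, which is why the principal-case lemma must be stated for arbitrary complete local rings.

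The gap is that the principal case, $\cdim(A/(g))\ge\min\{\cdim(A),\dim(A)-1\}-1$ for a complete local ring $A$, is the entire content of the theorem, and you do not prove it; you only name two possible routes. The first route, as described, does not work as stated: if $V(K)$ disconnects $V(g)$ and one writes $V(g)=Y_1\cup Y_2$ with $Y_1\cap Y_2\subseteq V(K)$ and $Y_i\not\subseteq V(K)$, this does not by itself yield a disconnection of $\Spec(A)$ by a closed set of controlled dimension, because an irreducible component of $\Spec(A)$ may meet both $Y_1$ and $Y_2$ outside $V(K)$. Excluding that possibility (or, in the standard treatment, proving the intersection inequality for two closed sets covering $\Spec(A)$) is precisely where completeness is used in an essential way, via the connectedness of the punctured spectrum of a complete local domain of dimension at least two; the second route, through Mayer--Vietoris sequences in local cohomology, requires Hartshorne--Lichtenbaum-type vanishing input of comparable depth. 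Without carrying out one of these arguments, the proposal is a correct outline of the classical proof rather than a proof.
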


We state some  well-known properties to have  that are helpful in several results.

\begin{remark}[{\cite[Remark 2.6]{NBSW}}]\label{rmk:height}
Let $R$ be a ring and let $I_1, \dots, I_n, J_1, \dots, J_m$ be ideals of  $R$, then
\begin{enumerate}
\item $\sqrt{\bigcap_{i = 1}^{n}I_i + \bigcap_{j = 1}^{m}J_j} = \sqrt{\bigcap_{i = 1}^{n}\bigcap_{j = 1}^{m}(I_i + J_j)}.$
\item $\Ht(\bigcap_{i = 1}^{n}I_i + \bigcap_{j = 1}^{m}J_j) = \min\Set{\Ht(I_i + J_j) | 1 \leq i \leq n, 1 \leq j \leq m}.$
\end{enumerate}
\end{remark}

\begin{remark}\label{rmk:ht+dim}
Let $A$ be a  complete local equidimensional ring and let $I$ be an ideal of $A$. Then
$$\height(I) + \dim(A/I) = \dim(A).$$
\end{remark}

A graph is connected if and only if no matter how we partition its set of indices in two non empty sets, we can always find an edge between a vertex of one of these two disjoint sets and a vertex of the other one. 

\begin{proposition}\label{pr:GraphConnectedPartitionEquiv}
Let $(R, \mathfrak{m})$ be a  local ring of dimension $d$ with more than one minimal prime. Let $t$ be an integer such that  $1\leq t\leq d-1$. Then $\Gamma_t(R)$ is connected if and only if $\height( \bigcap_{\p \in S}\p + \bigcap_{\q \in T}\q) \leq t$ for every $(S, T)$ partition of $\Min(R)$ such that $S$ and $T$ are non empty.
\end{proposition}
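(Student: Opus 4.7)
The plan is to reduce the proposition to the standard graph-theoretic characterization of connectedness: a simple graph $G$ is connected if and only if for every partition $V(G) = S \sqcup T$ into two non-empty sets, there is at least one edge of $G$ with one endpoint in $S$ and the other in $T$. Applied to $G = \Gamma_t(R)$, whose vertex set is $\Min(R)$, this means I only need to translate the statement ``there is an edge of $\Gamma_t(R)$ between $S$ and $T$'' into the algebraic height statement appearing in the proposition.

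The translation is effected by Remark \ref{rmk:height}(2). For a partition $(S,T)$ of $\Min(R)$ with $S,T \neq \emptyset$, that remark yields
$$
\height\Bigl(\bigcap_{\p \in S}\p \;+\; \bigcap_{\q \in T}\q\Bigr) \;=\; \min\{\Ht(\p + \q) : \p \in S,\; \q \in T\}.
$$
By Definition \ref{DefGraph}, the existence of an edge of $\Gamma_t(R)$ between $S$ and $T$ is equivalent to the existence of $\p \in S$ and $\q \in T$ with $\Ht(\p + \q) \leq t$; note that such $\p,\q$ are automatically distinct because $S \cap T = \emptyset$, so the ``distinct'' clause in the definition of $\Gamma_t(R)$ is harmless. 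The minimum on the right is therefore $\leq t$ if and only if such an edge exists, giving exactly the equivalence the proposition asserts.

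Combining the two steps yields both directions of the ``if and only if''. For $(\Rightarrow)$, if $\Gamma_t(R)$ is connected, any partition $(S,T)$ of $\Min(R)$ has an edge across it, so the minimum in the displayed formula is $\leq t$. For $(\Leftarrow)$, if the height condition holds for every such partition, then every partition of $\Min(R)$ admits a crossing edge in $\Gamma_t(R)$, hence the graph is connected. The main obstacle is genuinely nothing beyond a careful bookkeeping: the proposition is essentially an unfolding of Definition \ref{DefGraph} plus Remark \ref{rmk:height}(2), together with a standard graph-theoretic reformulation of connectivity. The hypothesis that $R$ has more than one minimal prime ensures that a non-trivial partition exists so the equivalence is not vacuous, while the range $1 \le t \le d-1$ is the range in which the condition carries content but is not automatic.
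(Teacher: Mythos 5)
Your proposal is correct and follows essentially the same route as the paper's own proof: both use Remark \ref{rmk:height}(2) to rewrite $\height\bigl(\bigcap_{\p \in S}\p + \bigcap_{\q \in T}\q\bigr)$ as $\min\{\Ht(\p+\q) : \p \in S, \q \in T\}$ and then invoke the partition characterization of graph connectedness. Your added remark that the ``distinct primes'' clause is harmless because $S$ and $T$ are disjoint is a small point the paper leaves implicit.
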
 

\begin{proof}
By Remark \ref{rmk:height}, we know that given a $(S, T)$ partition of $\Min(R)$, we have that
$$\Ht\left(\bigcap_{\p \in S}\p + \bigcap_{\q \in T}\q\right) = \min\Set{\Ht(\p + \q)|\p \in S, \q \in T},$$
so there must be $\p' \in S$ and $\q' \in T$ such that $\Ht\left(\bigcap_{\p \in S}\p + \bigcap_{\q \in T}\q\right) = \Ht(\p' + \q')$.
This means that for every $(S, T)$ partition of $\Min(R)$ such that $S$ and $T$ are non empty , we have that
$$\height\( \bigcap_{\p \in S}\p + \bigcap_{\q \in T}\q\right) \leq t \Leftrightarrow \exists \p' \in S, \q' \in T : \Ht(\p' + \q') \leq t.$$
So, for any such partition $(S, T)$, you can find an edge between $S$ and $T$. This happens if  and only if $\Gamma_t(R)$ is connected.
\end{proof}

\begin{remark}\label{rmk:GoingModuloParameter}
Let $(R,\mathfrak{m})$ be an equidimensional local ring with $\dim(R) = d \geq 1$. Let $x \in \mathfrak{m}$ such that $x$ is not an element of any minimal prime of $R$. 
Then, $\dim R/(x)=d-1$, because $x$ is a parameter. 
We know that the minimal primes of $R/(x)$ are of the form $\q/(x)$, with $\q$ a minimal prime of $(x)$. 
 Then $\dim\( \frac{R/(x)}{\q /(x)} \right) = \dim(R/\q)$, and $\Ht(\q)=1$ by Krull's Principal Ideal Theorem. We know that in $R$ the equality $\Ht(\q) + \dim(R/\q) = \dim(R) = d$ holds from Remark \ref{rmk:ht+dim}, we conclude that $\dim(R/\q) = d - 1$. So $\dim\(  \frac{R/(x)}{\q /(x)} \right) = d - 1$. This means $R/(x)$ is  equidimensional.
\end{remark}

Given a $\Gamma$ graph we focus our attention in its subgraph corresponding to certain subset of minimal primes. One way to study such subgraph is by doing specific quotients of the ring.

\begin{proposition}\label{pr:QuotientRingGraphEquiv}
Let $(R,\mathfrak m)$ be an equidimensional complete local ring and let $I$ be a proper ideal of $R$ such that $\Min(R/I) \subseteq \Min(R) $. Then $R/I$ is also an equidimensional complete local ring and $\dim(R/I) = \dim(R)$. Furthermore, if $J$ is an ideal of $R$ such that $I \subseteq J$ then $\height(J) = \height(J/I)$. In addition, if $\Sigma$ is the subgraph of $\Gamma_t(R)$ whose vertices are $\Min(I)$, then
$$ \Sigma \cong \Gamma_t{(R/I)}.$$
\end{proposition}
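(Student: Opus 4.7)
The plan is to verify each of the four assertions in turn, the only substantive step being the first; the others then follow by routine bookkeeping. A quotient of a complete local ring is complete local, so $R/I$ is complete local. For equidimensionality, the hypothesis $\Min(R/I)\subseteq\Min(R)$ should be read, via the standard bijection $\p\leftrightarrow\p/I$ between primes of $R$ containing $I$ and primes of $R/I$, as saying that every prime of $R$ minimal over $I$ is already a minimal prime of $R$. Since $R$ is equidimensional, each such $\p$ satisfies $\dim(R/\p)=\dim(R)$, and via the isomorphism $(R/I)/(\p/I)\cong R/\p$ this translates to the statement that every minimal prime of $R/I$ has coheight $\dim(R)$. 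Hence $\dim(R/I)=\dim(R)$ and $R/I$ is equidimensional.

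With this in hand, the height equality follows from Remark \ref{rmk:ht+dim} applied to both $R$ and $R/I$, together with the isomorphism $R/J\cong(R/I)/(J/I)$:
$$\height(J)=\dim(R)-\dim(R/J)=\dim(R/I)-\dim\bigl((R/I)/(J/I)\bigr)=\height(J/I).$$

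For the graph isomorphism, I would define $\varphi:\Sigma\to\Gamma_t(R/I)$ on vertices by $\p\mapsto\p/I$; by the standard correspondence this is a bijection between $\Min(I)$ and $\Min(R/I)$. For edges, two distinct vertices $\p,\q\in\Min(I)$ are adjacent in $\Sigma$ iff $\Ht(\p+\q)\leq t$, while $\p/I,\q/I$ are adjacent in $\Gamma_t(R/I)$ iff $\Ht((\p+\q)/I)\leq t$; these conditions agree by the height equality above applied to $J=\p+\q$. The only genuine subtlety is the equidimensionality step: once one correctly reads $\Min(R/I)\subseteq\Min(R)$ through the prime correspondence, the rest is simply combining Remark \ref{rmk:ht+dim} with standard facts about quotient rings.
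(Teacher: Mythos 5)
Your proposal is correct and follows essentially the same route as the paper: identify the minimal primes of $R/I$ with minimal primes of $R$ via the quotient correspondence to get equidimensionality and equality of dimensions, deduce $\height(J)=\height(J/I)$ from Remark \ref{rmk:ht+dim} applied to both rings, and then check that the vertex bijection $\p\mapsto\p/I$ preserves edges using that height equality with $J=\p+\q$. No gaps.
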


\begin{proof}
We know that $R/I$ is a  complete local ring. We also know that the minimal primes of $R/I$ are the ideals of the form $\p/I$ with $\p$ minimal prime of $R$.

Observe that $\dim\left(\frac{R/I}{\p /I}\right) = \dim(R/\p) = \dim(R)$ for every minimal prime $\p$ of $R$ since $R$ is equidimensional. This means $R/I$ is also equidimensional and $\dim(R/I) = \dim(R)$.

Let $J$ be an ideal of $R$ such that $I \subseteq J$. By Remark \ref{rmk:ht+dim} we have that $\Ht(J/I) + \dim\left(\frac{R}{I}/\frac{J}{I}\right) = \dim(R/I)$, so $\Ht(J/I) = \dim(R) - \dim(R/J)$. Remark \ref{rmk:ht+dim} implies that $\Ht(J) = \dim(R) - \dim(R/J)$. Thus $\Ht(J) = \Ht(J/I)$.

The correspondence between vertices of $\Sigma$ and vertices of $\Gamma_{t}(R/I)$ is given by assigning each minimal prime $\p$ of $I$ to the minimal prime $\p/I$ of $R/I$. Thus the vertices are preserved. Notice that edges are also preserved since if there is a edge between $\p$ and $\q$ minimal primes of $I$, then $\Ht(\p + \q) \leq t$. This is the same as saying that $\Ht(\p/I + \q/I) = \Ht((\p + \q)/I ) \leq t$ since $\Ht(\p + \q) = \Ht((\p + \q)/I )$ by the previous paragraph. 
\end{proof}

In particular we choose $I$ to be exactly the intersection of the minimal primes corresponding to the part of the graph we want to focus our attention on.

The next proposition gives us more information about how the graphs work when we study the quotient ring with different ideals but with the same radical.

\begin{observation}\label{obs:GammaGraphQuotientRadical}
Let $(R,\mathfrak m)$ be an equidimensional complete local ring and let $I$, $J$ be ideals of $R$ such that $\sqrt{I} = \sqrt{J}$. Then both $R/I$ and $R/J$ are  complete local rings of the same dimension and if $R/I$ is equidimensional, then $R/J$ is also equidimensional and $ \Gamma_t(R/I) \cong \Gamma_t{(R/J)} $.
\end{observation}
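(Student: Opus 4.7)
My plan is to exploit the fact that when $\sqrt{I}=\sqrt{J}$, the quotients $R/I$ and $R/J$ share the same prime ideal lattice. A prime $\p$ of $R$ contains $I$ if and only if it contains $\sqrt{I}=\sqrt{J}$, if and only if it contains $J$, so the assignment $\p/I\mapsto \p/J$ is a bijection $\Spec(R/I)\to\Spec(R/J)$ preserving all inclusions. Both quotients are complete local as quotients of a complete local ring, and since their prime spectra are order-isomorphic they have the same Krull dimension. Moreover, for every prime $\p$ of $R$ containing $I$ one has $\dim((R/I)/(\p/I))=\dim(R/\p)=\dim((R/J)/(\p/J))$.

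To transfer equidimensionality, I would note that $\p/I$ is a minimal prime of $R/I$ iff $\p$ is a minimal prime of $I$ iff $\p$ is a minimal prime of $\sqrt{I}=\sqrt{J}$ iff $\p/J$ is a minimal prime of $R/J$. Assuming $R/I$ is equidimensional means $\dim((R/I)/(\p/I))$ is independent of the minimal prime $\p/I$. By the previous paragraph and the coincidence of minimal primes, the same constancy holds for $R/J$, so $R/J$ is equidimensional (and of the same dimension as $R/I$).

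Finally, for the graph isomorphism, the bijection $\p/I\mapsto \p/J$ restricts to a bijection of minimal primes, i.e.\ a bijection on vertex sets of $\Gamma_t(R/I)$ and $\Gamma_t(R/J)$. To see that edges are preserved I would apply Remark \ref{rmk:ht+dim} to both equidimensional complete local rings $R/I$ and $R/J$: for minimal primes $\p,\q$ containing $I$,
$$\Ht\bigl((\p+\q)/I\bigr)=\dim(R/I)-\dim\bigl(R/(\p+\q)\bigr)=\dim(R/J)-\dim\bigl(R/(\p+\q)\bigr)=\Ht\bigl((\p+\q)/J\bigr),$$
using that $\dim(R/I)=\dim(R/J)$. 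Hence $\Ht((\p+\q)/I)\leq t$ iff $\Ht((\p+\q)/J)\leq t$, so the edge sets correspond under the vertex bijection, yielding $\Gamma_t(R/I)\cong \Gamma_t(R/J)$. The main obstacle, such as it is, is bookkeeping: one must keep track of the canonical identification between primes of $R$ containing $I$ and primes of $R/I$, and apply Remark \ref{rmk:ht+dim} only after establishing equidimensionality of $R/J$. No deeper input is required beyond the remarks already in the excerpt.
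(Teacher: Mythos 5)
Your proof is correct; the paper states this Observation without a proof, and your argument supplies exactly the intended details --- the identification $V(I)=V(J)$ in $\Spec(R)$, the transfer of equidimensionality through the common set of minimal primes, and the height comparison via Remark \ref{rmk:ht+dim} applied to both quotients once both are known to be equidimensional --- in the same style as the paper's neighboring Proposition \ref{pr:QuotientRingGraphEquiv}. Nothing is missing.
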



Now we are ready to begin exploring the relations between connectedness dimension and the $\Gamma$ graphs.

\begin{proposition}[{\cite[Proposition 2.5]{NBSW}}]\label{pr:GraphCDimEquiv}
Let $(R, \mathfrak{m})$ be an equidimensional complete local ring with $\dim(R) = d \geq 2$. let $t$ be an integer such that $1\leq t\leq d-1$. Then
$$ \Gamma_{t}(R) \text{ is connected } \Leftrightarrow \cdim(R) \geq d - t.$$
As a consequence, the connectedness dimension is given by
$$ \cdim(R) = \max\Set{ i \;|\; \Gamma_{d-i}(R) \text{ is connected }}.$$
\end{proposition}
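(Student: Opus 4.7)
The plan is to prove both implications by contrapositive, using Proposition \ref{pr:GraphConnectedPartitionEquiv} to translate connectedness of $\Gamma_t(R)$ into the height condition $\height(J_S + J_T) \leq t$ over all bipartitions $(S,T)$ of $\Min(R)$ into nonempty pieces, where $J_S = \bigcap_{\p \in S}\p$ and $J_T = \bigcap_{\q \in T}\q$; Remark \ref{rmk:ht+dim} converts heights into dimensions of quotient rings.

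For the direction $\cdim(R) \geq d-t \Rightarrow \Gamma_t(R)$ connected, assume $\Gamma_t(R)$ is disconnected and take a witnessing partition $(S,T)$ with $\height(J_S + J_T) \geq t+1$. Setting $I = J_S + J_T$, I claim $\Spec(R) \setminus V(I) = (V(J_S) \setminus V(I)) \sqcup (V(J_T) \setminus V(I))$ is a genuine disconnection: the union equals $\Spec(R) \setminus V(I)$ because $J_S \cap J_T = \bigcap_{\mathfrak r \in \Min(R)} \mathfrak r$ lies in the nilradical and hence $V(J_S) \cup V(J_T) = \Spec(R)$; the intersection is $V(J_S + J_T) \setminus V(I) = \emptyset$; and both pieces are nonempty because for any $\p \in S$ one has $J_T \not\subseteq \p$ (otherwise some $\q \in T$ would be contained in $\p$, forcing $\q = \p$ by incomparability of minimal primes, against $S \cap T = \emptyset$). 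Remark \ref{rmk:ht+dim} then yields $\dim(R/I) = d - \height(I) \leq d - t - 1 < d - t$, contradicting $\cdim(R) \geq d - t$.

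For the converse, assume an ideal $I$ witnesses $\cdim(R) \leq d - t - 1$, so $\Spec(R) \setminus V(I)$ is disconnected and $\height(I) \geq t+1 \geq 2$ by Remark \ref{rmk:ht+dim}. In particular $I$ lies in no minimal prime of $R$, so each $V(\p) \setminus V(I)$ (for $\p \in \Min(R)$) is a nonempty open subset of the irreducible closed set $V(\p)$, hence connected. Decomposing $\Spec(R) \setminus V(I) = U_1 \sqcup U_2$ into two nonempty clopen pieces forces each $V(\p) \setminus V(I)$ to lie entirely in exactly one $U_j$, which defines a bipartition $\Min(R) = S \sqcup T$ with $S, T \neq \emptyset$. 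A direct check gives $V(J_S + J_T) \cap (\Spec(R) \setminus V(I)) = U_1 \cap U_2 = \emptyset$, so $V(J_S + J_T) \subseteq V(I)$ and $\height(J_S + J_T) \geq \height(I) \geq t+1$, contradicting Proposition \ref{pr:GraphConnectedPartitionEquiv}. The displayed formula $\cdim(R) = \max\{i \mid \Gamma_{d-i}(R) \text{ is connected}\}$ is then the substitution $i = d - t$ in the main equivalence. I expect the most delicate step to be the bipartition construction in the converse, which depends on the bound $\height(I) \geq 2$ ensuring that $V(\p) \setminus V(I)$ is nonempty for every minimal prime---precisely where equidimensionality enters via Remark \ref{rmk:ht+dim}.
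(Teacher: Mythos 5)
The paper itself offers no proof of this statement---it is quoted verbatim from \cite[Proposition 2.5]{NBSW}---so there is nothing internal to compare against; judged on its own, your argument is correct and is essentially the standard one, built on exactly the right dictionary: Proposition \ref{pr:GraphConnectedPartitionEquiv} turns connectedness of $\Gamma_t(R)$ into the bound $\height(J_S+J_T)\leq t$ over bipartitions, and Remark \ref{rmk:ht+dim} turns heights into codimensions. Both implications check out. In the forward direction, the decomposition $\Spec(R)\setminus V(J_S+J_T)=(V(J_S)\setminus V(I))\sqcup(V(J_T)\setminus V(I))$ is a genuine disconnection for the reasons you give (the union is everything since $J_S\cap J_T$ is the nilradical; nonemptiness of each piece follows from incomparability of minimal primes), and note that a disconnected $\Gamma_t(R)$ automatically has at least two vertices, so Proposition \ref{pr:GraphConnectedPartitionEquiv} applies. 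In the converse, the key observation that each $V(\p)\setminus V(I)$ is irreducible, hence lands in a single clopen piece, correctly produces the bipartition; only $\height(I)\geq 1$ is actually needed there, not $\geq 2$. The one place you are too quick is the final ``consequence'': the equivalence you proved covers $1\leq i\leq d-1$, but the maximum can be attained at $i=d$ (equivalently $t=0$), i.e., one must separately check that $\cdim(R)=d$ exactly when $R$ has a unique minimal prime. This follows by the same partition argument (any bipartition gives $\height(J_S+J_T)\geq 1$, hence $\cdim(R)\leq d-1$ when there are two or more minimal primes, while an irreducible spectrum admits no nonempty disconnected open complement), but it is not literally ``the substitution $i=d-t$,'' so a sentence to that effect would close the argument.
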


We can compute connectedness dimension by counting how many of the $\Gamma_{t}(R)$ graphs with $t \in [0, d-1]$ are connected. 
Even if a graph $\Gamma_t(R)$ is not connected we can also obtain information regarding its connected components.

\begin{notation}
Let $G$ be a graph and let $X$ be a topological space. We denote $\#G$ to the amount of connected components of $G$ and denote $\#X$ to the amount of connected components of the space $X$.
\end{notation}

\begin{corollary}[{\cite[Corollary 2.7]{NBSW}}]\label{cor:NumberConnectedComponents}
Let $(R, \m)$ be an equidimensional complete local ring of dimension $d \geq 2$.
Let $t$ be an integer such that $1\leq t\leq d-1$.
Then: 
$$ \#\Gamma_t(R) = \max\Set{\#(\Spec(R) - V(I))\; |\; \dim(R/I) < d - t }.$$
\end{corollary}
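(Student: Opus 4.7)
The strategy is to prove the two inequalities separately: for every $I$ with $\dim(R/I) < d-t$ show $\#(\Spec(R)-V(I)) \leq \#\Gamma_t(R)$, and then construct a specific $I$ achieving equality. Throughout I would translate topological data on $\Spec(R) - V(I)$ into combinatorial data on $\Min(R)$, using that by Remark~\ref{rmk:ht+dim} and equidimensionality the hypothesis $\dim(R/I) < d-t$ is equivalent to $\height(I) > t$.

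For the upper bound, fix $I$ with $\height(I) > t \geq 1$. Since every minimal prime $\p$ has $\height(\p) = 0 < \height(I)$, the ideal $I$ is contained in no minimal prime, so each $V(\p) - V(I)$ is a nonempty open subset of the irreducible closed set $V(\p)$, hence connected, and these cover $\Spec(R) - V(I)$. Grouping the minimal primes according to which connected component of $\Spec(R)-V(I)$ contains the corresponding $V(\p) - V(I)$ yields a partition of $\Min(R)$ with exactly $\#(\Spec(R)-V(I))$ parts. The key step is to show that this partition is coarsened by the one induced by connected components of $\Gamma_t(R)$: if $\{\p,\q\}$ is an edge of $\Gamma_t(R)$, then $\height(\p+\q) \leq t$, so some minimal prime $P$ over $\p+\q$ has $\height(P) \leq t < \height(I)$, forcing $I \not\subseteq P$; then $P$ is a common point of $V(\p) - V(I)$ and $V(\q) - V(I)$, so these two sets lie in the same component. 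Transitivity along paths in $\Gamma_t(R)$ then yields $\#(\Spec(R)-V(I)) \leq \#\Gamma_t(R)$.

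For the reverse direction, let $C_1, \ldots, C_k$ be the connected components of $\Gamma_t(R)$, set $J_i = \bigcap_{\p \in C_i}\p$, and propose the candidate $I = \bigcap_{i<j}(J_i + J_j)$ (taking $I = \mathfrak{m}$ in the trivial case $k=1$). By Remark~\ref{rmk:height}(2),
\[
\height(J_i+J_j) = \min\{\height(\p+\q) : \p \in C_i, \q \in C_j\},
\]
and this minimum exceeds $t$ because distinct components of $\Gamma_t(R)$ contain no adjacent primes. Since the height of a finite intersection of ideals equals the minimum of their heights, $\height(I) > t$, so $\dim(R/I) < d-t$. The $k$ sets $V(J_i) - V(I)$ are nonempty (each contains every prime in $C_i$), pairwise disjoint (because $I \subseteq J_i + J_j$ forces $V(J_i) \cap V(J_j) \subseteq V(I)$), and together cover $\Spec(R) - V(I)$ since $\bigcap_i J_i$ lies in the nilradical; hence each is clopen in $\Spec(R) - V(I)$, giving $\#(\Spec(R)-V(I)) \geq k$. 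Combined with the upper bound this yields equality.

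The main obstacle will be the equidimensionality step in the upper bound: one must use Remark~\ref{rmk:ht+dim} to turn the dimension hypothesis into the strict height inequality $\height(I) > t$, and then invoke the existence of a minimal prime of height $\leq t$ over $\p+\q$ that necessarily escapes $V(I)$. Without the equidimensional complete-local setting, heights and codimensions decouple and this bridge between edges of $\Gamma_t(R)$ and actual points of $\Spec(R)-V(I)$ breaks.
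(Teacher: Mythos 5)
Your argument is correct and complete: the translation of $\dim(R/I)<d-t$ into $\height(I)>t$ via Remark~\ref{rmk:ht+dim}, the use of the connected sets $V(\p)-V(I)$ to map edges of $\Gamma_t(R)$ into common components of $\Spec(R)-V(I)$, and the explicit witness $I=\bigcap_{i<j}(J_i+J_j)$ for the reverse inequality all check out. The paper itself does not prove this statement but cites it from \cite[Corollary 2.7]{NBSW}, where the argument proceeds along essentially these same lines (the correspondence between clopen decompositions of $\Spec(R)-V(I)$ and edge-free partitions of $\Min(R)$), so your proof matches the intended one.
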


\section{Connectedness Dimension and $\Gamma_t$ Graphs Modulo a Parameter}
In this section we study our connectedness graphs
 modulo a parameter. In order to do this, we need to develop need additional tools. The following lemma gives us information about the behavior between minimal primes of a ring and the minimal primes of an ideal generated by a parameter. 

\begin{lemma}\label{lem1}
Let $(R,\mathfrak m)$ be an  equidimensional complete local ring with $\dim(R) = d \geq 1$. Let $x \in \mathfrak m$ such that $x$ is not an element of any minimal prime of $R$. Then
\begin{enumerate}
\item For every minimal prime $\q$ of $(x)$, there is a minimal prime $\p$ of $R$ such that $\p \subseteq \q$.
\item For every minimal prime $\p$ of $R$, there is a minimal prime $\q$ of $(x)$ such that $\p\subseteq \q$.
\end{enumerate}
\end{lemma}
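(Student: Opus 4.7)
For part (1), I would simply invoke the fact that in any Noetherian ring every prime ideal contains a minimal prime; applying this to the prime $\q$ immediately produces a minimal prime $\p$ of $R$ with $\p \subseteq \q$. This can be dispatched in one line.

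The substance lies in part (2). My plan is to fix a minimal prime $\p$ of $R$ and pass to the quotient $R/\p$, which by equidimensionality of $R$ is a complete local domain of dimension $d$. Since $x \notin \p$, its image $\overline{x}$ is a nonzero (hence nonzero-divisor) element of $R/\p$, so by Krull's principal ideal theorem any prime $\overline{\q}$ minimal over $(\overline{x})$ has height one in $R/\p$. This gives, upon lifting, a prime $\q \supseteq \p + (x)$ of $R$ with $\dim(R/\q) = d-1$. Remark \ref{rmk:ht+dim} then promotes this to $\Ht(\q) = 1$ in $R$ itself. To finish, since $\q \supseteq (x)$, I would choose a minimal prime $\q_0$ of $(x)$ contained in $\q$; Krull's principal ideal theorem gives $\Ht(\q_0) = 1 = \Ht(\q)$, and strict monotonicity of heights along proper containments of primes in a Noetherian ring forces $\q_0 = \q$. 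Hence $\q$ is itself a minimal prime of $(x)$ containing $\p$, as required.

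The delicate step, and the one I expect to demand the most care, is precisely the last one: the prime $\q$ produced over $\p + (x)$ is a priori only a prime containing a minimal prime of $(x)$, not a minimal prime of $(x)$ itself. Without equidimensionality, the minimal prime $\q_0$ of $(x)$ sitting below $\q$ could be strictly smaller, and then the containment $\p \subseteq \q_0$ would be lost, breaking the argument. Equidimensionality enters exactly at this juncture, through the identity $\Ht(I) + \dim(R/I) = \dim(R)$ of Remark \ref{rmk:ht+dim}, which forces $\Ht(\q_0) = \Ht(\q)$ and thereby collapses the two primes.
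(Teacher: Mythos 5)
Your proof is correct and follows essentially the same route as the paper's: part (1) is the trivial fact that every prime contains a minimal prime, and for part (2) you produce a minimal prime $\q$ of $\p+(x)$, use equidimensionality (via Remark \ref{rmk:ht+dim}) to force $\Ht(\q)=1$, and conclude $\q$ must be minimal over $(x)$. Your final step (comparing $\q$ with a minimal prime $\q_0$ of $(x)$ beneath it and collapsing them by strict monotonicity of height) is just a slight repackaging of the paper's height comparison, and your diagnosis of where equidimensionality is essential matches the paper exactly.
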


\begin{proof}
For the first part, we observe that $\Ht(\q)=1$. Then, there must be a minimal prime $\p$ of $A$ such that $\p \subseteq \q$ by prime avoidance.

We now show the second part.
Let $\p$ be a minimal prime of $R$. Notice that $R/\p$  that $\overline{x}$ is not contained in the unique minimal prime of $R/\p$, so by Remark \ref{rmk:GoingModuloParameter}, we get that $\frac{R  / \p}{(\overline{x})} \cong R/(\p+(x))$ is equidimensional of dimension $d - 1$ and that $\height(\overline{x})=1$.

Since 
$$ \height(\overline{x})+\dim\left(\frac{R/\p}{(\overline{x})}\right)=\dim\left(R /\p\right),$$
we conclude that 
$$ \dim\left(R/(\p+(x))\right) = d - 1.$$
We also know that
$$ \height(\p + (x)) + \dim\left(R/(\p+(x))\right) = \dim(R),$$
and so,
$$ \height(\p + (x)) = 1.$$
Now take $\q \in \Min(\p + (x))$ such that $\height(\q) = \height(\p + (x))$. Since $(x) \subseteq \p + (x)$ and $\height(x) = \height(\p + (x))$ by Remark \ref{rmk:GoingModuloParameter}, then $\q \in \Min(x)$. Finally $\p \subseteq \q$ because $\q \in \Min(\p + (x))$.  
\end{proof}

From  Lemma \ref{lem1}, we are able to characterize minimal primes of $R/(x)$.  

\begin{proposition}\label{prop:D(P)=Min(P+(x))}
Let $(R,\mathfrak m)$ be an equidimensional complete local ring with $\dim(R) = d \geq 1$. Let $x \in \mathfrak m$ such that $x$ is not an element of any minimal prime of $R$. For every minimal prime $\p$ of $R$, we have that
$$ \Min(\p + (x)) = \{\q \in \Min(x)\:|\:\p \subseteq \q\}.$$
\end{proposition}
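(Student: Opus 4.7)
The plan is to prove the two inclusions separately, both of which rely on height computations and on the equidimensionality of $R/(\p+(x))$ already established in the proof of Lemma \ref{lem1}.

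For the inclusion $\supseteq$, I would take $\q\in\Min(x)$ with $\p\subseteq\q$. Then $\p+(x)\subseteq\q$, so I only need to verify minimality. If $\q'$ is a prime with $\p+(x)\subseteq\q'\subseteq\q$, then in particular $(x)\subseteq\q'\subseteq\q$, so by the minimality of $\q$ over $(x)$ we get $\q'=\q$. This half is straightforward and does not use any of the structural hypotheses beyond the inclusions.

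For the inclusion $\subseteq$, I would take $\q\in\Min(\p+(x))$. Clearly $\p\subseteq\q$, so what remains is to show $\q\in\Min(x)$. Here is where I lean on Lemma \ref{lem1}: its proof shows that $R/(\p+(x))$ is equidimensional of dimension $d-1$, and hence every minimal prime of $\p+(x)$ has height exactly $1$ by Remark \ref{rmk:ht+dim}. In particular $\height(\q)=1$. Since $(x)\subseteq\q$, there exists $\q_0\in\Min(x)$ with $\q_0\subseteq\q$. By Krull's principal ideal theorem $\height(\q_0)\leq 1$; on the other hand $x\in\q_0$ forces $\q_0$ to be distinct from every minimal prime of $R$ (by hypothesis on $x$), so $\height(\q_0)\geq 1$. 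Therefore $\height(\q_0)=1=\height(\q)$, and since the chain $\q_0\subseteq\q$ cannot be strict without violating $\height(\q)=1$, we conclude $\q=\q_0\in\Min(x)$.

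The main step to be careful about is the justification that $\height(\q)=1$ for every $\q\in\Min(\p+(x))$; this is the only place where equidimensionality plays a genuine role, and it is the ingredient that upgrades a generic chain-of-primes argument into the desired equality of sets. Once that height computation is in hand, the rest is a short manipulation with Krull's principal ideal theorem and the hypothesis that $x$ avoids the minimal primes of $R$.
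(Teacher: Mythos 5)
Your proof is correct and follows essentially the same route as the paper: the key step in both is that equidimensionality of $R/(\p+(x))$ together with Remark \ref{rmk:ht+dim} forces every minimal prime of $\p+(x)$ to have height $1$, after which one identifies it with a minimal prime of $(x)$. Your argument for the inclusion $\supseteq$ is a slight (valid) simplification, using minimality of $\q$ over $(x)$ directly rather than the paper's comparison of heights.
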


\begin{proof}
We proceed by double containment. Let $ \cA = \{\q \in \Min(x)\:|\:\p \subseteq \q\}$.

Take a minimal prime $\q$ of $(x)$ that contains $\p$. Since $\p + (x) \subseteq \q$ and  $\height(\p + (x)) = \height(\q)$, then $\q$ must be a minimal prime of $\p + (x)$. Then, $\cA \subseteq \Min(\p + (x))$.

Now, let $\p \in \Min(R)$ and let $\q \in \Min(\p + (x))$. We show that $\height(\q)=1$. From Remark \ref{rmk:GoingModuloParameter}, we know that $\frac{R}{\p + (x)}$ is an equidimensional ring of dimension $d-1$. We know from Remark \ref{rmk:ht+dim} that
$$ 
\height\left(\q/(\p + (x))\right) + \dim\left(\frac{R/(\p + (x))}{\q/(\p + (x))}\right) = \dim\left(R/(\p + (x))\right).$$
Since $\q/(\p + (x)) \in \Min\left(R/(\p + (x))\right)$, we get that $\height\left(\q/(\p + (x))\right) = 0$. We conclude that 
$$ \dim\left(\frac{R/(\p + (x))}{\q/(\p + (x))}\right) 
= \dim\left(R/\q\right) = d - 1.$$
We also know that $\height(\q) + \dim(R/\q) = \dim(R)$, and so $\height(\q) = 1$.
This means that $\q$ is a minimal prime of $(x)$ and contains $\p$. Hence,  $\Min(\p + (x)) \subseteq \cA$.
\end{proof}

We know $D(\p) = \Min(\p + (x))$ by Proposition \ref{prop:D(P)=Min(P+(x))}. From Lemma \ref{lem1}, we deduce that $\bigcup_{\p \in \Min(R)}D(\p) = \Min(x)$.

\begin{corollary}\label{cor:MinInt+(x)}
Let $(R,\m)$ be an equidimensional complete local ring of $\dim(R) = d \geq 1$. Let $x \in \m$ such that $x$ is a not an element of  any minimal prime of $R$. Let $S$ be a non empty subset of $\Min(R)$ and let $I = \bigcap_{\p \in S}\p$. Then
$$ \Min(I + (x)) = \bigcup_{\p \in S}\Min(\p + (x)).$$
\end{corollary}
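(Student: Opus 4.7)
The plan is to prove the set equality by double containment, leveraging the fact from Remark \ref{rmk:height}(1) that
$$\sqrt{I + (x)} = \sqrt{\textstyle\bigcap_{\p \in S}(\p + (x))},$$
and the height-1 characterization given by Proposition \ref{prop:D(P)=Min(P+(x))}. The radical identity translates into $V(I + (x)) = \bigcup_{\p \in S} V(\p + (x))$, so a prime $\q$ contains $I+(x)$ if and only if it contains $\p + (x)$ for some $\p \in S$. This is the main technical fact I will repeatedly use.

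For the inclusion $\bigcup_{\p \in S}\Min(\p + (x)) \subseteq \Min(I + (x))$, I would pick $\q \in \Min(\p+(x))$ for some $\p \in S$ and first observe that $\q$ contains $I + (x)$ since $I \subseteq \p$. By Proposition \ref{prop:D(P)=Min(P+(x))}, $\q \in \Min(x)$, so $\height(\q) = 1$. If $\q$ were not minimal over $I + (x)$, there would be $\q' \subsetneq \q$ with $\q' \supseteq I + (x)$; but then $\q'$ contains $(x)$, forcing $\height(\q') \geq 1$, which is incompatible with $\q' \subsetneq \q$ and $\height(\q) = 1$.

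For the reverse inclusion, I would take $\q \in \Min(I + (x))$. By the radical identity above, $\q \supseteq \p + (x)$ for some $\p \in S$. It then suffices to show that $\q$ is minimal over $\p + (x)$: if $\q' \subsetneq \q$ satisfied $\q' \supseteq \p + (x)$, then $\q' \supseteq I + (x)$ as well, contradicting the minimality of $\q$ over $I + (x)$. Hence $\q \in \Min(\p + (x))$.

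I do not anticipate any real obstacle here; the proof is essentially formal once Proposition \ref{prop:D(P)=Min(P+(x))} and the radical identity from Remark \ref{rmk:height} are in place. The only subtle point to keep in mind is that the ideals $I + (x)$ and $\bigcap_{\p \in S}(\p+(x))$ need not be equal, only equal up to radical, so I must phrase the containment of varieties rather than of ideals when decomposing the set of primes containing $I+(x)$.
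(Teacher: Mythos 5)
Your proof is correct and follows essentially the same route as the paper's: both directions rest on the containment $I+(x)\subseteq \p+(x)\subseteq \q$, on prime avoidance (your radical identity from Remark \ref{rmk:height} is just a packaged form of it), and on the fact from Proposition \ref{prop:D(P)=Min(P+(x))} that the minimal primes of $\p+(x)$ have height $1$. The only cosmetic difference is that the paper phrases the second containment as an equality of heights of the ideals $I+(x)$ and $\p+(x)$, whereas you argue directly that no prime strictly below a height-one $\q$ can contain $x$; these are the same bookkeeping.
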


\begin{proof}
We proceed by double containment. 

First we prove that $ \Min(I + (x)) \subseteq \bigcup_{\p \in S}\Min(\p + (x))$. Take a minimal prime $\q$ of $I + (x)$. By prime avoidance $\q$ contains a prime $\p \in S$. But $I + (x) \subseteq \p + (x) \subseteq \q$. This implies that $\q$ is also a minimal prime of $\p + (x)$. This also means that $\height(I + (x)) = 1$, because all the minimal primes of $\p + (x)$ are of height $1$ by Proposition \ref{prop:D(P)=Min(P+(x))}.

Now, we prove that $\bigcup_{\p \in S}\Min(\p + (x)) \subseteq  \Min(I + (x))$. Let $\p \in S$ and $\q$ be a minimal prime of $\p + (x)$. Since $I + (x) \subseteq \p + (x)$ and $\height(I + (x)) = \height(\p + (x)) = \height(\q)$, we deduce that $\q$ must also be a minimal prime of $I + (x)$.
\end{proof}

The following definition plays a key role in the rest of the section, in particular, in  Theorem \ref{thm:Main2}.

\begin{definition}
Let $(R,\mathfrak m)$ be an equidimensional complete local ring with $\dim(R) = d \geq 1$. Let $x \in \mathfrak m$ be such that $x$ is not an element of any minimal prime of $R$. Given a minimal prime $\p$ of $R$, we define the dust of $\p$ modulo $x$ by 
$$D_x(\p)=\{\q \in \Min(x)\:|\:\p \subseteq \q\}.$$
Furthermore if $\Sigma$ is a subgraph of $\Gamma_t(R)$, then
$$D_x(\Sigma)=\bigcup_{\p \in \Sigma}D_x(\p).$$
If $x$ is clear from the context, we omit the subscript.
\end{definition}

\begin{definition}
Let $(R,\mathfrak m)$ be an  equidimensional complete local ring with $\dim(R) = d \geq 1$. Let $x \in \mathfrak m$ be such that $x$ is not an element of any minimal prime of $R$. Let $\Sigma$ be a subgraph of $\Gamma_t(R)$. Let $\Sigma^{'}$ be the subgraph of $\Gamma_t(R/(x))$ such that its vertices are given by $\q/(x)$ such that $\q \in D(\Sigma)$. We call $\Sigma^{'}$ the associated graph to $\Sigma$.
\end{definition}

In the previous setting let $\Sigma$ be the subgraph of $\Gamma_t(R)$ whose vertices are the elements of $S$. We  observe that $D(\Sigma) = \Min(I + (x))$, because $D(\p) = \Min(\p + (x))$.

Now we are ready for our study of connectedness dimension modulo a parameter. It turns out that if $\Gamma_t(R)$ is connected, then $\Gamma_t(R/(x))$ is also connected. The only case when this is not necessarily true is when $t = 0$, as the following example shows. 

\begin{example}
Let $K$ be a field and consider the power series ring $R = K[[x,y,z]]$. $\Gamma_t(R)$ is connected for every $t$ since $R$ is a domain, but $\Gamma_0(R/(xyz))$ is not connected since it has more than one minimal prime. 
\end{example}

Additionally, we restrict $t$ to be less or equal than $d-2$. We do so because $\Gamma_{d-1}(R/(x))$ is connected regardless the connectedness of $\Gamma_{d-1}(R)$.
We now recall a result on this regarding connectedness modulo a parameter. We poit out that the original statement requires that the residue field is separably closed \cite[Proposition 3.2]{NBSW}; however, this hypothesis is not necessary in the proofs.

\begin{theorem}[{\cite[Proposition 3.2]{NBSW}}]\label{thm1}
Let $(R,\m)$ be an equidimensional complete local ring containing a field,of $\dim(R) = d \geq 3$. Let $x \in \m$ such that $x$ is a not an element of  any minimal prime of $R$. Let $t$ be an integer such that  $1\leq t\leq d-2$. Then,
$$\Gamma_t{(R)} \:\text{is connected} \Rightarrow \Gamma_{t}(R/(x)) \:\text{is connected}.$$
\end{theorem}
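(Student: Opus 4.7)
The plan is to combine Grothendieck's connectedness theorem (Theorem \ref{Grothendieck}) with the dictionary between connectedness of the graphs $\Gamma_t$ and the invariant $\cdim$ supplied by Proposition \ref{pr:GraphCDimEquiv}. Since $R$ is an equidimensional complete local ring of dimension $d \geq 2$ and $\Gamma_t(R)$ is connected, Proposition \ref{pr:GraphCDimEquiv} immediately gives the numerical bound $\cdim(R) \geq d - t$, converting the graph-theoretic hypothesis into a dimension-theoretic one.

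Next I would apply Theorem \ref{Grothendieck} to the ideal $I = (x)$. Because $x$ lies outside every minimal prime of $R$, it is not nilpotent, so $\ara((x)) = 1$. Combining this with the previous bound,
$$
\cdim(R/(x)) \;\geq\; \min\{\cdim(R), d-1\} - 1 \;\geq\; \min\{d-t, d-1\} - 1.
$$
The hypothesis $t \geq 1$ guarantees $d - t \leq d - 1$, so the minimum collapses to $d - t$ and we arrive at $\cdim(R/(x)) \geq (d-1) - t$.

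To conclude, I would apply Proposition \ref{pr:GraphCDimEquiv} a second time, now to the ring $R/(x)$. By Remark \ref{rmk:GoingModuloParameter}, $R/(x)$ is an equidimensional complete local ring of dimension $d - 1$, which is at least $2$ because $d \geq 3$; and the restriction $t \leq d-2$ places $t$ in the admissible range $1 \leq t \leq (d-1)-1$ for the proposition applied to the new ring. The inequality $\cdim(R/(x)) \geq (d-1) - t$ obtained in the previous step is then exactly the criterion for $\Gamma_t(R/(x))$ to be connected.

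The main obstacle is really just bookkeeping, making sure every inequality is sharp enough: the three numerical hypotheses $d \geq 3$, $t \geq 1$, and $t \leq d - 2$ must cooperate so that Proposition \ref{pr:GraphCDimEquiv} is applicable to both $R$ and $R/(x)$ and so that the $\min$ in Grothendieck's theorem picks out $d-t$ rather than $d-1$. The excluded case $t = 0$, witnessed by the earlier example $R = K[[x,y,z]]$ with $x \mapsto xyz$, shows that this numerical range is genuinely necessary rather than an artifact of the approach, and the dimension bound $d \geq 3$ is what ensures $R/(x)$ still falls within the scope of Proposition \ref{pr:GraphCDimEquiv}.
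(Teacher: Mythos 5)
Your argument is correct: the three numerical hypotheses do line up exactly as you check ($\cdim(R)\geq d-t$ from Proposition \ref{pr:GraphCDimEquiv}, then $\cdim(R/(x))\geq \min\{d-t,\,d-1\}-1=(d-1)-t$ from Theorem \ref{Grothendieck} with $\ara((x))\leq 1$, then Proposition \ref{pr:GraphCDimEquiv} applied to the equidimensional $(d-1)$-dimensional ring $R/(x)$). The paper itself gives no proof here --- it simply cites \cite[Proposition 3.2]{NBSW} --- and your derivation is essentially the standard one behind that citation; it also substantiates the paper's remark that the separably closed residue field hypothesis (and indeed the equal-characteristic hypothesis) is not needed.
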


We now give a slightly more general  version of Theorem \ref{thm1}.
 This is needed in our proof of Theorem \ref{thm:Main2}.

\begin{corollary}\label{cor:ConnectedImpSigmaSigma'}
Let $(R,\m)$ be an equidimensional complete local ring containing a field, of $\dim(R) = d \geq 3$. Let $x \in \m$ such that $x$ is a not an element of  any minimal prime of $A$. Let $t$ be an integer such that $1\leq t\leq d-2$. Let $\Sigma$ be a subgraph of $\Gamma_t(R)$ and let $\Sigma^{'}$ be the subgraph of $\Gamma_t(R/(x))$ associated to $D(\Sigma)$. Then,
$$ \Sigma \:\text{is connected} \Rightarrow \Sigma^{'} \:\text{is connected}.$$
\end{corollary}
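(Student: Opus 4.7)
The plan is to reduce the statement to Theorem \ref{thm1} applied to a suitable quotient of $R$, and then translate the conclusion back through the operations of quotienting and passing to a radical. Let $I=\bigcap_{\p\in\Sigma}\p$. Since the vertices of $\Sigma$ are minimal primes of $R$, we have $\Min(R/I)=\{\p/I:\p\in\Sigma\}\subseteq\Min(R)$, so Proposition \ref{pr:QuotientRingGraphEquiv} applies: $R/I$ is an equidimensional complete local ring with $\dim(R/I)=d$, and the subgraph $\Sigma$ of $\Gamma_t(R)$ is isomorphic to $\Gamma_t(R/I)$.

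Next I would promote $x$ to a parameter of $R/I$. Since the minimal primes of $R/I$ are of the form $\p/I$ with $\p\in\Sigma\subseteq\Min(R)$, and $x$ is not in any minimal prime of $R$, the class $\bar x$ of $x$ in $R/I$ lies in no minimal prime of $R/I$. The hypotheses of Theorem \ref{thm1} are now met for the ring $R/I$ with parameter $\bar x$: $R/I$ contains a field, is equidimensional complete local of dimension $d\geq 3$, and $1\leq t\leq d-2$. Since $\Gamma_t(R/I)\cong\Sigma$ is connected by assumption, Theorem \ref{thm1} yields that $\Gamma_t\bigl((R/I)/(\bar x)\bigr)\cong\Gamma_t\bigl(R/(I+(x))\bigr)$ is connected.

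It remains to identify this last graph with $\Sigma'$. By Corollary \ref{cor:MinInt+(x)}, $\Min(I+(x))=\bigcup_{\p\in\Sigma}\Min(\p+(x))=D(\Sigma)$. Let $J=\bigcap_{\q\in D(\Sigma)}\q$, so that $\sqrt{I+(x)}=J$. Because $R/(I+(x))$ is equidimensional (Remark \ref{rmk:GoingModuloParameter} combined with the argument giving equidimensionality above, or directly from the setup of Theorem \ref{thm1}), Observation \ref{obs:GammaGraphQuotientRadical} gives that $R/J$ is equidimensional as well and $\Gamma_t(R/(I+(x)))\cong\Gamma_t(R/J)$. On the other hand, $R/(x)$ is equidimensional by Remark \ref{rmk:GoingModuloParameter} and $\Min(R/(x)/(J/(x)))=\{\q/(x):\q\in D(\Sigma)\}\subseteq\Min(R/(x))$, so Proposition \ref{pr:QuotientRingGraphEquiv} applied inside $R/(x)$ shows that $\Sigma'\cong\Gamma_t\bigl((R/(x))/(J/(x))\bigr)\cong\Gamma_t(R/J)$. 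Chaining the isomorphisms gives $\Sigma'\cong\Gamma_t(R/(I+(x)))$, which is connected by the previous paragraph.

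The main obstacle I anticipate is the bookkeeping in the last step: one must make sure that each quotient satisfies the equidimensionality hypothesis required by Proposition \ref{pr:QuotientRingGraphEquiv} and Observation \ref{obs:GammaGraphQuotientRadical}, which is exactly where Corollary \ref{cor:MinInt+(x)} and Remark \ref{rmk:GoingModuloParameter} are used. Everything else is a clean reduction to Theorem \ref{thm1} once the right auxiliary ring $R/I$ is identified.
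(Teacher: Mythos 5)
Your proposal is correct and follows essentially the same route as the paper: reduce to $\Gamma_t(R/I)$ via Proposition \ref{pr:QuotientRingGraphEquiv}, apply Theorem \ref{thm1} to $R/I$ with the parameter $\bar x$, and then identify $\Gamma_t(R/(I+(x)))$ with $\Sigma'$ using Corollary \ref{cor:MinInt+(x)}, Observation \ref{obs:GammaGraphQuotientRadical}, and Proposition \ref{pr:QuotientRingGraphEquiv} again. Your extra care about verifying equidimensionality at each quotient is a welcome addition but does not change the argument.
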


\begin{proof}
Suppose $\Sigma$ is connected. Let $I$ be the intersection of all the vertices of $\Sigma$. From Proposition \ref{pr:QuotientRingGraphEquiv} we know that $\Sigma \cong \Gamma_t(R/I)$, so $\Gamma_t(R/I)$ is also connected and Theorem \ref{thm1} implies that $\Gamma_t\left( R/(I + (x)) \right )$ is also connected.

From Corollary \ref{cor:MinInt+(x)} we know that $\sqrt{I + (x)} = \bigcap_{\q \in \Min(I + (x))}\q = \bigcap_{\q \in D(\Sigma)}\q$. Observation \ref{obs:GammaGraphQuotientRadical} implies that $\Gamma_t\left( R/(I + (x)) \right) \cong \Gamma_t\left (R/\bigcap_{\q \in D(\Sigma)}\q\right )$.
Then,
$$\Gamma_t\left (\frac{R}{\bigcap_{\q \in D(\Sigma)} \q}\right ) \cong \Gamma_t\left(\frac{R/(x)}{\bigcap_{\q \in D(\Sigma)}\q/(x)}\right) \cong \Sigma^{'}$$
by Proposition \ref{pr:QuotientRingGraphEquiv}.
We conclude that $\Sigma^{'}$ is also connected.
\end{proof}

\begin{lemma}\label{lem:SimaSigma'0VrtxCmmn}
Let $(R,\m)$ be an equidimensional complete local ring of $\dim(R) = d \geq 1$. Let $x \in \m$ such that $x$ is not in any minimal prime of $R$. Let $t$ be an integer such that  $1\leq t\leq d-2$. Let $\Sigma_i$ and $\Sigma_j$ be subgraphs of $\Gamma_t(R)$. If the graphs $\Sigma_i$ and $\Sigma_j$ do not share any vertices and there are no edges between them, then $D(\Sigma_i)$ and $D(\Sigma_j)$ are disjoint. In particular the subgraphs $\Sigma_i^{'}$ and $\Sigma_j^{'}$ of $\Gamma_t(R/(x))$ associated to $\Sigma_i$ and $\Sigma_j$ respectively do not share vertices.
\end{lemma}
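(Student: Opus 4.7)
The plan is to argue by contradiction. Suppose $D(\Sigma_i) \cap D(\Sigma_j) \neq \emptyset$ and pick $\q$ in the intersection. By definition of the dust, there exist vertices $\p_1 \in \Sigma_i$ and $\p_2 \in \Sigma_j$ with $\p_1 \subseteq \q$ and $\p_2 \subseteq \q$, so that $\p_1 + \p_2 \subseteq \q$. The hypothesis that the two subgraphs share no vertex forces $\p_1 \neq \p_2$, so the pair $(\p_1,\p_2)$ is a legitimate candidate for an edge in $\Gamma_t(R)$.

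Next I would produce the contradiction by a height computation. Since $\q$ is a minimal prime of the principal ideal $(x)$ and $x$ lies in no minimal prime of $R$ (so $x$ is a nonzerodivisor on the relevant quotients), Krull's Principal Ideal Theorem gives $\Ht(\q) = 1$. Combined with $\p_1 + \p_2 \subseteq \q$ and the assumption $t \geq 1$, this yields
$$ \Ht(\p_1 + \p_2) \leq \Ht(\q) = 1 \leq t.$$
By the definition of $\Gamma_t(R)$, this means there is an edge between $\p_1$ and $\p_2$, contradicting the assumption that there are no edges between $\Sigma_i$ and $\Sigma_j$. Hence $D(\Sigma_i) \cap D(\Sigma_j) = \emptyset$.

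For the final clause, I would simply unwind the definition of an associated graph: the vertex set of $\Sigma_k^{'}$ equals $\{\q/(x) \mid \q \in D(\Sigma_k)\}$ for $k = i,j$. The disjointness of $D(\Sigma_i)$ and $D(\Sigma_j)$ in $\Min(x)$ transfers immediately through the bijection $\q \mapsto \q/(x)$ between $\Min(x)$ and $\Min(R/(x))$, so the vertex sets of $\Sigma_i^{'}$ and $\Sigma_j^{'}$ are disjoint. I do not anticipate any genuine obstacle: the whole argument hinges on the single observation that every element of $\Min(x)$ has height $1$, together with a careful parsing of what "no edges between $\Sigma_i$ and $\Sigma_j$" means (namely, no edge of $\Gamma_t(R)$ joining a vertex of $\Sigma_i$ to a vertex of $\Sigma_j$).
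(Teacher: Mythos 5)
Your proof is correct and follows essentially the same route as the paper's: pick $\q$ in the alleged intersection, find $\p_1\in\Sigma_i$ and $\p_2\in\Sigma_j$ contained in $\q$, and conclude $\Ht(\p_1+\p_2)\leq\Ht(\q)=1\leq t$, contradicting the absence of edges between the two subgraphs. Your extra remarks (that $\p_1\neq\p_2$, and that the final clause follows from the bijection $\q\mapsto\q/(x)$) are correct and only make explicit what the paper leaves implicit.
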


\begin{proof}
Let $\q \in D(\Sigma_i) \cap D(\Sigma_j)$, then there are $\p_i \in \Sigma_i$ and $\p_j \in \Sigma_j$ such that $\q \in D(\p_i) \cap D(\p_j)$. So $\p_i + \p_j \subseteq \q$, this means that $\Ht(\p_i + \p_j) \leq \Ht(\q) = 1 \leq t$. So there is an edge between $\Sigma_i$ and $\Sigma_j$, a contradiction.
\end{proof}

The following lemma plays the main role in the proof Theorem \ref{thm:Main2}. 

\begin{lemma}\label{lem:Tor}
Let $(R,\m)$ be an equidimensional complete local ring. Suppose there is an $x \in \m$ such that $x$ is a non zero divisor of $R$ and that $(x)$ is a radical ideal. Let $(S, T)$ be a partition of $\Min(R)$ such that $S$ and $T$ are non empty, and $I = \bigcap_{\p \in S}\p$ and $J=\bigcap_{\q \in T}\q$. Then, $x$ is a non zero divisor of $\frac{R}{I + J}$. In particular $x$ is not in any minimal prime of $I + J$.
\end{lemma}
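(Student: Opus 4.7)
The plan is to first observe that the hypotheses force $R$ itself to be reduced, after which the problem reduces to computing the intersection $(I+(x))\cap(J+(x))$ in the reduced ring $R/(x)$.

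First I would show $R$ is reduced. Since $R/(x)$ is reduced, every nilpotent of $R$ lies in $(x)$, so $\sqrt{0}\subseteq(x)$; writing $y\in\sqrt{0}$ as $y=xr$ and using that $x$ is a non zero divisor (so $x^k r^k = 0$ forces $r^k = 0$), one deduces $r\in\sqrt{0}$, hence $\sqrt{0}=x\sqrt{0}$, and Nakayama's lemma forces $\sqrt{0}=0$. As consequences, $I\cap J=\bigcap_{\p\in\Min(R)}\p=0$ (so $IJ=0$), and the quotients $R/I$, $R/J$ are reduced with minimal primes corresponding to $S$ and $T$ respectively; since $x$ lies in no minimal prime of $R$ and associated primes coincide with minimal primes in a reduced Noetherian ring, $x$ is a non zero divisor on both $R/I$ and $R/J$.

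The core step is to prove $(I+(x))\cap(J+(x))=(x)$. One inclusion is trivial. For the other, I would compute $(I+(x))(J+(x))=IJ+I(x)+J(x)+(x)^2\subseteq(x)$ using $IJ=0$, pass to the reduced ring $R/(x)$ where the images of $I+(x)$ and $J+(x)$ have product zero, and use reducedness to conclude that any element of their intersection squares to zero and hence vanishes.

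With this in hand the result follows directly. Given $xr\in I+J$, write $xr=a+b$ with $a\in I$, $b\in J$; then $a=xr-b\in(I+(x))\cap(J+(x))=(x)$, so $a=xc$ for some $c\in R$. Non zero divisibility of $x$ on $R/I$ forces $c\in I$, and then $b=x(r-c)\in J$ combined with $x$ being a non zero divisor on $R/J$ yields $r-c\in J$, whence $r\in I+J$. The ``in particular'' clause follows since minimal primes of $I+J$ are among the associated primes of $R/(I+J)$. The main obstacle is the initial reduction: recognizing that the combination of $x$ being a non zero divisor with $(x)$ being radical silently upgrades $R$ to being reduced is the key insight that collapses $I\cap J$ to zero and makes the entire Mayer--Vietoris-style computation feasible.
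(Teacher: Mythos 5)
Your proof is correct, and it reaches the conclusion by a genuinely different route than the paper. Both arguments ultimately hinge on the same key identity $(I+(x))\cap(J+(x))=(x)$ (equivalently $I\cap J+(x)=(I+(x))\cap(J+(x))$), but you establish and exploit it differently. The paper proves the identity by a chain of radical manipulations, using only that $I\cap J=\sqrt{0}$ and $\sqrt{(x)}=(x)$ (it never needs $R$ to be reduced), and then converts it into the statement $\Ann_{R/(I+J)}(x)=0$ homologically, via the long exact sequence of $\Tor$ obtained by tensoring the Mayer--Vietoris sequence $0\to R/(I\cap J)\to R/I\oplus R/J\to R/(I+J)\to 0$ with $R/(x)$. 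You instead first upgrade the hypotheses to ``$R$ is reduced'' (the Nakayama argument from $\sqrt{0}=x\sqrt{0}$ is a nice observation the paper leaves implicit and does not actually need), deduce $IJ=0$, prove the intersection identity from reducedness of $R/(x)$, and then finish with a direct element chase: from $xr=a+b$ you pull $a$ into $(x)$, divide by $x$ modulo $I$ and then modulo $J$ using that $x$ is a non zero divisor on $R/I$ and $R/J$, and conclude $r\in I+J$. Your element-theoretic ending is more elementary and self-contained; the paper's $\Tor$ argument packages the same divisibility bookkeeping into exactness and generalizes more readily. Both treatments quietly use that $x$, being a non zero divisor of $R$, lies in no minimal (hence no associated) prime of the reduced rings $R/I$ and $R/J$; you state this explicitly, which is a point the paper glosses over when it asserts $\Ann_{R/I}(x)=0$.
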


\begin{proof}
Consider the exact sequence 
$$0 \rightarrow \frac{R}{I \cap J} \rightarrow \frac{R}{I} \oplus \frac{R}{J} \rightarrow \frac{R}{I+J} \rightarrow 0.$$
This sequence induces a long exact sequence of Tor of the form

\begin{center}
\begin{tikzcd}
 \cdots \rar & \Tor_1\left( \frac{R}{I \cap J} ,\frac{R}{(x)}\right) \rar & \Tor_{1}\left( \frac{R}{I} \oplus \frac{R}{J},\frac{R}{(x)} \right) \rar
             \ar[draw=none]{d}[name=X, anchor=center]{}
    & \Tor_{1}\left(\frac{R}{I+J},\frac{R}{(x)}\right) \ar[rounded corners,
            to path={ -- ([xshift=2ex]\tikztostart.east)
                      |- (X.center) \tikztonodes
                      -| ([xshift=-2ex]\tikztotarget.west)
                      -- (\tikztotarget)}]{dll}[at end]{} \\      
 & \frac{R}{I \cap J} \otimes \frac{R}{(x)} \rar &\left(\frac{R}{I} \oplus \frac{R}{J}\right)\otimes \frac{R}{(x)} \rar &  \frac{R}{I+J} \otimes \frac{R}{(x)}\rar &  0
\end{tikzcd}
\end{center}

Since $x$ is a non zero divisor of $R$, we have that $\Tor_1(R/I,R/(x)) = \Ann_{R/I}(x)$, and $ \Ann_{R/I}(x)=0$.
Similarly $\Tor_1(R/J,R/(x)) = \Ann_{R/J}(x) = 0$. Then,
$$\Tor_{1}\left(\frac{R}{I} \oplus \frac{R}{J},\frac{R}{(x)}\right) = \Tor_1\left(\frac{R}{I},\frac{R}{(x)}\right) \oplus \Tor_1\left(\frac{R}{J},\frac{R}{(x)}\right) = 0.$$
We have that $\Tor_{1}\left(\frac{R}{I+J},\frac{R}{(x)}\right) = \Ann_{\frac{R}{I+J}}(x)$, and $\frac{R}{\a}\otimes \frac{R}{(x)}=\frac{R}{\a+(x)}$ for every ideal $\a\subseteq R.$
Then, 
$$0 \rightarrow \Ann_{\frac{R}{I+J}}(x) \rightarrow \frac{R}{I \cap J + (x)} \rightarrow \frac{R}{I+(x)} \oplus \frac{R}{J+(x)} \rightarrow \frac{R}{I+J+(x)} \rightarrow 0.$$
Observe that
\begin{eqnarray*}
(x) & \subseteq & \sqrt{0} + (x) \\
& = & I \cap J + (x) \\
& \subseteq & (I+(x)) \cap (J+(x)) \\
& \subseteq & \sqrt{(I+(x)) \cap (J+(x))} \\
& = & \sqrt{I \cap J+(x)} \\
& = & \sqrt{ \sqrt{0} + \sqrt{(x)}} \\
& = & \sqrt{ 0 + (x)} \\
& = & \sqrt{(x)} \\
& = & (x).
\end{eqnarray*}
This  implies that $I \cap J + (x) = (I+(x)) \cap (J+(x))$. Since the sequence
$$0 \rightarrow \frac{R}{(I+(x)) \cap (J+(x))} \rightarrow \frac{R}{I + (x)} \oplus \frac{R}{J + (x)} \rightarrow \frac{R}{(I+(x))+(J+(x))} \rightarrow 0$$
is exact, we conclude that $\Ann_{\frac{R}{I+J}}(x) = 0$. This means that $x$ is a non zero divisor of $\frac{R}{I+J}$,
and  so, $x$ is not in any minimal prime of $I + J$.
\end{proof}

\begin{theorem}\label{thm2}
Let $(R,\mathfrak m)$ be a  equidimensional complete local ring with $\dim(R) = d \geq 3$. Suppose there exists an $x \in \mathfrak m$ such that $x$ is a non zero divisor of $R$ and that $(x)$ is a radical ideal. Let $t$ be an integer such that  $1\leq t\leq d-2$. Then
$$\Gamma_{t}(R/(x)) \:\text{is connected} \Rightarrow \Gamma_t{(R)} \:\text{is connected.}$$
As a consequence
$$ \cdim(R) = \cdim(R/(x)) + 1.$$
\end{theorem}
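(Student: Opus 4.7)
The approach is to prove the contrapositive: if $\Gamma_t(R)$ is disconnected then so is $\Gamma_t(R/(x))$. Suppose $\Gamma_t(R)$ is disconnected. By Proposition \ref{pr:GraphConnectedPartitionEquiv} there is a partition $(S,T)$ of $\Min(R)$ into two non-empty sets with $\height(I+J) > t$, where $I = \bigcap_{\p \in S}\p$ and $J = \bigcap_{\q \in T}\q$. The plan is to transfer this disconnection to $R/(x)$ by using the dust sets $S' = D_x(S)$ and $T' = D_x(T)$ as the two halves of a partition of $\Min(x)$.

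The technical heart of the argument is a height bump: $\height(I+J+(x)) = \height(I+J)+1 \geq t+2$. By Lemma \ref{lem:Tor}, $x$ is a non-zero divisor on $R/(I+J)$, so $x$ avoids every minimal prime of $I+J$. Although $R/(I+J)$ may fail to be equidimensional, the ambient ring $R$ is equidimensional and complete, so Remark \ref{rmk:ht+dim} converts dimensions into heights. Since $x$ is a non-zero divisor on $R/(I+J)$ lying in the maximal ideal, the dimension drops by exactly one when $x$ is adjoined, which forces the claimed bump. With this in hand, Lemma \ref{lem1} shows $\Min(x) = S' \cup T'$ with both pieces non-empty, and any common element $\mathfrak r \in S' \cap T'$ would contain $I+J+(x)$, giving $1 = \height(\mathfrak r) \geq t+2$, impossible since $t \geq 1$; hence $(S',T')$ is a genuine partition.

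To finish, for any $\mathfrak r_1 \in S'$ and $\mathfrak r_2 \in T'$ the containment $I+J+(x) \subseteq \mathfrak r_1 + \mathfrak r_2$, together with the equidimensionality of $R/(x)$ (Remark \ref{rmk:GoingModuloParameter}) and Remark \ref{rmk:ht+dim}, yields $\height_{R/(x)}(\mathfrak r_1/(x) + \mathfrak r_2/(x)) \geq t+1 > t$. Thus there is no cross-edge in $\Gamma_t(R/(x))$, and that graph is disconnected. For the consequence, combining the just-proved implication with Theorem \ref{thm1} gives the equivalence ``$\Gamma_t(R)$ connected iff $\Gamma_t(R/(x))$ connected'' for every $t \in [1,d-2]$; Proposition \ref{pr:GraphCDimEquiv}, applied to $R$ of dimension $d$ and to $R/(x)$ of dimension $d-1$, translates this into $\cdim(R) = \cdim(R/(x))+1$. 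The main obstacle is the height bump itself: the dimension formula cannot be applied inside the possibly non-equidimensional $R/(I+J)$, so the computation must be routed through the equidimensional ambient ring $R$, where the non-zero-divisor property from Lemma \ref{lem:Tor} converts cleanly into the precise height gain that drives the rest of the proof.
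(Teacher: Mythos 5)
Your proof of the main implication is correct, but it takes a genuinely different (contrapositive) route from the paper's. The paper assumes $\Gamma_t(R/(x))$ is connected and works with connectedness dimensions: using the formula $\cdim(R)=\min\dim\bigl(R/(I+J)\bigr)$ over partitions $(S,T)$ of $\Min(R)$ together with Lemma~\ref{lem:Tor}, it obtains the chain $\cdim(R)\geq \cdim(R/(x))+1\geq d-t$ and then invokes Proposition~\ref{pr:GraphCDimEquiv}. You instead start from a disconnecting partition of $\Gamma_t(R)$, prove the height bump $\height(I+J+(x))=\height(I+J)+1$ (correctly routed through the equidimensional ambient ring via Remark~\ref{rmk:ht+dim}, since $R/(I+J)$ need not be equidimensional), and push the partition down to $\Min(R/(x))$ via the dust sets, checking disjointness and the absence of cross-edges. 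Both arguments hinge on Lemma~\ref{lem:Tor} in exactly the same way, and your version is in fact closer in spirit to the paper's proof of Theorem~\ref{ThmGraphParameter}, where the same dust partition and the same estimate $\height(I+J)+1\leq\height(\p_1+\p_2+(x))\leq\height(\q_1+\q_2)$ appear. The paper's route buys brevity, since Proposition~\ref{pr:GraphCDimEquiv} absorbs the bookkeeping about partitions and cross-edges; yours buys a self-contained graph-theoretic argument that avoids the (slightly delicate) appeal to the partition formula for $\cdim$.

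For the consequence $\cdim(R)=\cdim(R/(x))+1$ there is a gap. The equivalence you get from combining with Theorem~\ref{thm1} only controls $\Gamma_t$ for $t\in[1,d-2]$, hence via Proposition~\ref{pr:GraphCDimEquiv} it only pins down $\cdim(R)$ when that value lies in $[2,d-1]$; the endpoints require separate arguments (for instance, $\Gamma_{d-1}(R)$ is connected when $\Min(R)$ has at least two elements, because Lemma~\ref{lem:Tor} shows $\m$ is never a minimal prime of $I+J$, so $\height(I+J)\leq d-1$). Moreover, when $R$ has a single minimal prime the stated equality can genuinely fail: for $R=K[[u,v,w]]$ and $x=uv$ one has $\cdim(R)=3$ while $\cdim(R/(x))=1$. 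So this part cannot be completed as written without excluding the one-minimal-prime case (or replacing $\cdim(R)$ by $\min\{\cdim(R),d-1\}$). To be fair, the paper's own proof is no more careful here: it only establishes the inequality $\cdim(R)\geq\cdim(R/(x))+1$, and only when $\Min(R)$ admits a nontrivial partition.
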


\begin{proof}
Let $(S, T)$ be a partition  of $\Min(R)$.
 We know $\cdim(R) = \dim(R/(I+J))$ where $I$ and $J$ are the intersection of all the elements of $S$ and $T$ respectively.

From Lemma \ref{lem:Tor}, we know that $x$ is not an element of any minimal prime of $I + J$. Then,
\begin{eqnarray*}
\cdim(R) & = & \dim(R/(I+J)) \\
& = & \dim(R/(I+J + (x))) + 1 \\
& \geq & \dim\left(\frac{R}{\bigcap_{\p \in S}\bigcap_{\q \in D(\p)}\q + \bigcap_{\p \in T}\bigcap_{\q \in D(\p)}\q}\right)+1 \\
& = & \dim\left(\frac{R/(x)}{\bigcap_{\p \in S}\bigcap_{\q \in D(\p)}\q/(x) + \bigcap_{\p \in T}\bigcap_{\q \in D(\p)}\q/(x)}\right)+1 \\
& \geq & \cdim(R/(x)) + 1.
\end{eqnarray*}

From Proposition \ref{pr:GraphCDimEquiv} we have the inequality $\cdim(R/(x)) \geq (d - 1) - t$, so $\cdim(R/x) + 1 \geq d - t$. From our previous chain of inequalities, we get that $\cdim(R) \geq d -t$. We conclude that $\Gamma_t(R)$ is connected.
\end{proof}

\begin{theorem}\label{ThmGraphParameter}
Let $(R,\mathfrak m)$ be an equidimensional complete local ring containing a field, of $\dim(R) = d \geq 3$. Suppose there exists $x \in \mathfrak m$ such that $x$ is a non zero divisor of $R$ and that $(x)$ is a radical ideal. Let $t$ be an integer such that  $1\leq t\leq d-2$. Then
$$ \#\Gamma_t(R) = \#\Gamma_t(R/(x)).$$
\end{theorem}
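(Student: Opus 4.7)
The plan is to exhibit the connected components of $\Gamma_t(R/(x))$ as the dust-images of those of $\Gamma_t(R)$. Let $\Sigma_1,\ldots,\Sigma_k$ denote the connected components of $\Gamma_t(R)$, with vertex sets $S_1,\ldots,S_k\subseteq\Min(R)$, and let $\Sigma_i^{'}$ be the subgraph of $\Gamma_t(R/(x))$ associated to $\Sigma_i$. I will argue that $\Sigma_1^{'},\ldots,\Sigma_k^{'}$ are precisely the connected components of $\Gamma_t(R/(x))$, from which the theorem follows.

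First I would record three ``easy'' structural facts about the family $\{\Sigma_i^{'}\}$. Corollary \ref{cor:ConnectedImpSigmaSigma'} yields that each $\Sigma_i^{'}$ is connected. Because for $i\neq j$ the components $\Sigma_i,\Sigma_j$ share neither a vertex nor an edge, Lemma \ref{lem:SimaSigma'0VrtxCmmn} gives $D(\Sigma_i)\cap D(\Sigma_j)=\emptyset$, so the $\Sigma_i^{'}$ are pairwise vertex-disjoint. Finally, Lemma \ref{lem1}(1) implies $\bigcup_i D(\Sigma_i)=\Min(x)$, so the $\Sigma_i^{'}$ together exhaust every vertex of $\Gamma_t(R/(x))$.

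The main obstacle---the only step that truly uses radicality of $(x)$ and the fact that $x$ is a non-zero-divisor---is to prove that for $i\neq j$ no edge of $\Gamma_t(R/(x))$ joins a vertex of $\Sigma_i^{'}$ to a vertex of $\Sigma_j^{'}$. One may assume $k\geq 2$. Fix $i$, set $S=S_i$, $T=\Min(R)\setminus S$, $I=\bigcap_{\mathfrak{p}\in S}\mathfrak{p}$, and $J=\bigcap_{\mathfrak{q}\in T}\mathfrak{q}$. Since $\Sigma_i$ is a full connected component of $\Gamma_t(R)$, no edge crosses $(S,T)$, so Remark \ref{rmk:height}(2) gives $\height(I+J)\geq t+1$; combined with Remark \ref{rmk:ht+dim} in $R$, this forces $\dim R/(I+J)\leq d-t-1$. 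By Lemma \ref{lem:Tor}, $x$ is a non-zero-divisor on $R/(I+J)$, so $x$ lies in no minimal prime of $R/(I+J)$; a standard chain-of-primes argument then yields $\dim R/(I+J+(x))\leq \dim R/(I+J)-1\leq d-t-2$.

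To conclude, pick $\mathfrak{q}_1\in D(\Sigma_i)$ and $\mathfrak{q}_2\in D(\Sigma_j)$. Then $\mathfrak{q}_1\supseteq I$, $\mathfrak{q}_2\supseteq J$, and $x\in\mathfrak{q}_1\cap\mathfrak{q}_2$, whence $\mathfrak{q}_1+\mathfrak{q}_2\supseteq I+J+(x)$. Monotonicity of height together with Remark \ref{rmk:ht+dim} in $R$ gives $\height(\mathfrak{q}_1+\mathfrak{q}_2)\geq t+2$; and since $R/(x)$ is equidimensional of dimension $d-1$ by Remark \ref{rmk:GoingModuloParameter}, Remark \ref{rmk:ht+dim} applied in $R/(x)$ yields $\height((\mathfrak{q}_1+\mathfrak{q}_2)/(x))=\height(\mathfrak{q}_1+\mathfrak{q}_2)-1\geq t+1>t$, so indeed no such edge exists. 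Combined with the three structural facts above, this shows $\Sigma_1^{'},\ldots,\Sigma_k^{'}$ are exactly the connected components of $\Gamma_t(R/(x))$, giving $\#\Gamma_t(R)=k=\#\Gamma_t(R/(x))$.
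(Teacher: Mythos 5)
Your proposal is correct and follows essentially the same route as the paper: the same decomposition of $\Gamma_t(R/(x))$ into the associated subgraphs $\Sigma_i^{'}$, with connectedness from Corollary \ref{cor:ConnectedImpSigmaSigma'}, vertex-disjointness from Lemma \ref{lem:SimaSigma'0VrtxCmmn}, and the crucial absence of cross edges resting on Lemma \ref{lem:Tor}. The only cosmetic difference is that you run the height computation directly through $I+J+(x)$ and dimensions, whereas the paper argues by contradiction via the intermediate ideal $\p_1+\p_2+(x)$; both hinge on the same point, namely that $x$ avoids every minimal prime of $I+J$.
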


\begin{proof}
Suppose $\#\Gamma_t(R) = s$. Let $\Sigma_1, \dots, \Sigma_s$ be the $s$ connected components of $\Gamma_t(R)$. Let $\Sigma_1^{'}, \dots, \Sigma_s^{'}$ be the subgraphs of $\Gamma_t(R/(x)$ associated to the sets $D(\Sigma_1), \dots, D(\Sigma_s)$ respectively. We show that the associated graphs are the connected components of $\Gamma_t(R/(x))$.
Let  $\a_i = \bigcap_{\p \in \Sigma_i}\p$. From Corollary \ref{cor:ConnectedImpSigmaSigma'} and its proof we know that $\Sigma_i^{'} \cong \Gamma_t\left(R/(\a_i + (x))\right)$ is also connected for each $i$.

From Lemma \ref{lem:SimaSigma'0VrtxCmmn} we know that for distinct $i$ and $j$, the graphs $\Sigma_i^{'}$ and $\Sigma_j^{'}$ do not share vertices. Thus they are distinct connected subgraphs of $\Gamma_t(R/(x))$.

It remains to show that for every pair of distinct $\Sigma_i^{'}$ and $\Sigma_j^{'}$ there are no edges between them, so they are indeed the connected components of $\Gamma_t(R/(x))$.

For $i \neq j$, suppose there is an edge between $\q_1 /(x) \in \Sigma_{i}^{'}$ and $\q_2 /(x) \in \Sigma_{j}^{'}$. Let $S$ be the set of vertices of $\Gamma_t(R)$ in $\Sigma_i$ and let $T$ be the set of vertices of $\Gamma_t(R)$ which are not in $\Sigma_i$.
Note that $(S, T)$ is a partition of $\Min(R)$. Let $I$ and $J$ be the intersection of all the elements of $S$ and $T$ respectively. Take $\p_1$ and $\p_2$ such that $\q_1 \in D(\p_1)$ and $\q_2 \in D(\p_2)$. Since $I + J \subseteq \p_1 + \p_2 + (x)$, we have that $\height(I + J) \leq \height(\p_1 + \p_2 + (x))$. Suppose that the  equality holds. Take a minimal prime $\q$ of $\p_1 + \p_2 + (x)$ such that $\Ht(\q) = \Ht(\p_1 + \p_2 + (x))$. Since $I + J$ and $\p_1 + \p_2 +(x)$ have the same height, $\q$ must also be a minimal prime of $I + J$. This is not possible by  Lemma \ref{lem:Tor}, because $x \in \q$. We have that
\begin{eqnarray*}
\height(I + J) + 1 & \leq & \height( \mathfrak{p}_1 + \mathfrak{p}_2 + (x)) \\
& \leq & \height(\mathfrak{q}_1 + \mathfrak{q}_2) \\
& = & \height(\mathfrak{q}_1 /(x) + \mathfrak{q}_2 /(x)) + 1.
\end{eqnarray*}
Thus $\height(I + J) \leq \height(\mathfrak{q}_1 /(x) + \mathfrak{q}_2 /(x)) \leq t$. From the proof of Proposition \ref{pr:GraphConnectedPartitionEquiv} we know this means there is an edge between some prime in $S$ and some prime in $T$. Then, there is an edge between a vertex of $\Sigma_i$ and a vertex of another connected component of $\Gamma_t(R)$,  which is a contradiction.
We conclude that $\Sigma_{1}^{'}, \Sigma_{2}^{'}, \dots, \Sigma_{s}^{'}$ are the connected components of $\Gamma_t(R/(x))$.

\end{proof}

\section{Applications to Groebner deformations}

In this section we apply the results regarding a parameter to initial ideals and square free Groebner deformations.

\begin{remark}\label{RemGrad}
Let  $I\subseteq S=K[x_1,\ldots, x_n]/I$, where $I$ homogeneous under a grading given by a vector $w\in\ZZ_{>0}$.
Let $\m=(x_1,\ldots,x_n)$ be the maximal homogeneous ideal. Let $R=S/I$.
Let $J_{k}=S_{\geq t}$. Let $\widehat{R}$ denote the localization of $R$ with respect to $\{J_k\}$. We note that $\widehat{R}$ is equal to the $|m$-adic completion of $R$ and $R_\m$.
Let $\p$ be a homogeneous  prime ideal of $R$. We note that $\p \widehat{R}$ is a prime ideal in $\widehat{R}$, because 
$$\bigoplus_{k\in\NN} \frac{J_k (\widehat{R}/\p\widehat{R})}{J_{k+1} (\widehat{R}/ \p\widehat{R})}=R/\p$$ is a domain. As a consequence, there is a one-to-one  correspondence between the minimal primes of $R$ and the minimal primes of $\widehat{R}$, and their sums have the same height.
Then, $\Gamma_k(R)=\Gamma_k(\widehat{R}).$
\end{remark}

\begin{theorem}\label{ThmGroeb}
Let $S=K[x_1,\ldots,x_n]$ be a polynomial ring over a field.
Let $I$ be an  equidimensional ideal and $<$ a monomial order such that $\IN_<(I)$ is square-free.
Then, 
$$
\#\Gamma_t(S/I)=\#\Gamma_t(S/\IN(I)).
$$
As a consequence,
$\cdim(S/I)=\cdim(S/\IN(I))$.
\end{theorem}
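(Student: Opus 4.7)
The plan is to realize $S/I$ and $S/\IN_<(I)$ as the generic and special fibers of a $\ZZ$-graded flat family and then apply Theorem~\ref{ThmGraphParameter} to the total space of that family. Fix a weight vector $w\in\ZZ_{>0}^n$ inducing $<$ on $I$, so $\IN_w(I)=\IN_<(I)$. Let $\widetilde{I}\subseteq S[z]$ be the $w$-homogenization of $I$ (with $z$ a new variable), and set $A=S[z]/\widetilde{I}$ with the positive $\ZZ$-grading $\deg(x_i)=w_i$, $\deg(z)=1$. Standard Gr\"obner-deformation theory gives: the minimal primes of $A$ are exactly the classes $\widetilde{\mathfrak p}/\widetilde{I}$ for $\mathfrak p\in\Min(I)$, so $A$ is equidimensional of dimension $\dim(S/I)+1$; the element $z$ is a non-zero divisor on $A$ (because $\widetilde{I}$ is $z$-saturated); and $A/(z)\cong S/\IN_<(I)$. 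Since $\IN_<(I)$ is square-free, the ideal $(z)A$ is radical.

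Next, complete at the graded maximal ideal $\mathfrak n=(x_1,\ldots,x_n,z)$. By Remark~\ref{RemGrad}, $\#\Gamma_t(A)=\#\Gamma_t(\widehat A)$ and $\#\Gamma_t(A/(z))=\#\Gamma_t(\widehat A/(z)\widehat A)$, and $\widehat A$ inherits equidimensionality (finitely generated $K$-algebras are excellent), the element $z$ remains a non-zero divisor, $(z)\widehat A$ remains radical, and $\dim\widehat A=\dim(S/I)+1\geq 3$. Applying Theorem~\ref{ThmGraphParameter} with parameter $z$ yields
\[
\#\Gamma_t(A)\;=\;\#\Gamma_t(\widehat A)\;=\;\#\Gamma_t(\widehat A/(z)\widehat A)\;=\;\#\Gamma_t(S/\IN_<(I)).
\]

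The main obstacle is to identify $\#\Gamma_t(A)$ with $\#\Gamma_t(S/I)$, that is, to show the dehomogenization bijection $\widetilde{\mathfrak p}/\widetilde{I}\leftrightarrow \mathfrak p$ preserves the heights of pairwise sums of minimal primes. Given $\mathfrak P_i=\widetilde{\mathfrak p}_i/\widetilde{I}$, the key observation is that $S[z]/(\widetilde{\mathfrak p}_1+\widetilde{\mathfrak p}_2)$ is a graded quotient under the positive $\ZZ$-grading on $S[z]$, so all of its associated primes are graded; in particular, none of them contains the inhomogeneous element $z-1$, and therefore $z-1$ is a non-zero divisor on this quotient. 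Setting $z=1$ recovers $S/(\mathfrak p_1+\mathfrak p_2)$, so
\[
\dim S[z]/(\widetilde{\mathfrak p}_1+\widetilde{\mathfrak p}_2)\;=\;\dim S/(\mathfrak p_1+\mathfrak p_2)+1.
\]
Catenarity and equidimensionality of the finitely generated $K$-algebras $A$ and $S/I$ (so that $\Ht+\dim=\dim(\text{ring})$, the analogue of Remark~\ref{rmk:ht+dim}) now yield $\Ht_A(\mathfrak P_1+\mathfrak P_2)=\Ht_{S/I}(\mathfrak p_1+\mathfrak p_2)$, whence $\Gamma_t(A)\cong\Gamma_t(S/I)$. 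Combined with the previous display, this establishes $\#\Gamma_t(S/I)=\#\Gamma_t(S/\IN_<(I))$. The equality of connectedness dimensions then follows from Proposition~\ref{pr:GraphCDimEquiv} together with $\dim(S/I)=\dim(S/\IN_<(I))$ (equal Hilbert functions). The delicate point is precisely the height-preservation step, because sums of homogenizations are not literally homogenizations of sums, and one must exploit gradedness of associated primes to dehomogenize.
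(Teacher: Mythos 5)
Your overall architecture is the same as the paper's: pass to the $w$-homogenization $A=S[z]/\widetilde I$, identify $A/(z)$ with $S/\IN_<(I)$, complete at the graded maximal ideal and invoke Remark~\ref{RemGrad} and Theorem~\ref{ThmGraphParameter} with the parameter $z$, and then reduce everything to showing that the bijection $\p\mapsto\widetilde{\p}$ on minimal primes preserves the heights of pairwise sums. Up to that last step your write-up is fine. The gap is in the height-preservation step itself, and it sits exactly where you flag the ``delicate point.'' From the fact that $z-1$ is a non zero divisor on $B=S[z]/(\widetilde{\p}_1+\widetilde{\p}_2)$ you may only conclude $\dim B/(z-1)B\leq\dim B-1$, i.e.
$$\dim S/(\p_1+\p_2)\ \leq\ \dim S[z]/(\widetilde{\p}_1+\widetilde{\p}_2)-1,$$
not the equality you assert. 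A non zero divisor avoids every minimal prime, so each component loses at least one dimension after modding out; but the top-dimensional components of $V(\widetilde{\p}_1)\cap V(\widetilde{\p}_2)$ could be contained in $V(z)$, where $z-1$ is a unit, and then they contribute nothing after dehomogenization and the dimension can drop by more than one. Concretely, for $\p_1=(x)$ and $\p_2=(x-1)$ in $K[x,y]$ one gets $\widetilde{\p}_1+\widetilde{\p}_2=(x,z)$, of height $2$, while $\p_1+\p_2=(1)$: sums of homogenizations can acquire excess-dimensional components at infinity. Translated through $\Ht+\dim=\dim$, your argument only yields $\Ht_A(\mathfrak P_1+\mathfrak P_2)\leq\Ht_{S/I}(\p_1+\p_2)$, which shows every edge of $\Gamma_t(S/I)$ survives in $\Gamma_t(A)$ and hence $\#\Gamma_t(A)\leq\#\Gamma_t(S/I)$; it does not rule out spurious extra edges of $\Gamma_t(A)$ created at infinity, which is what is needed for $\#\Gamma_t(A)\geq\#\Gamma_t(S/I)$.

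The missing inequality $\Ht_{S/I}(\p_1+\p_2)\leq\Ht_A(\mathfrak P_1+\mathfrak P_2)$ is the substantive half, and the paper closes it with a different device: by Krull's principal ideal theorem,
$$\Ht(\p_1+\p_2)+1=\Ht\bigl(\widetilde{\p}_1+\widetilde{\p}_2+(z-1)\bigr)\leq\Ht(\widetilde{\p}_1+\widetilde{\p}_2)+1,$$
while the reverse inequality (the one your non-zero-divisor argument does give) is obtained there from the containment $\widetilde{\p}_1+\widetilde{\p}_2\subseteq(\p_1+\p_2)^w$ together with the fact that homogenization preserves height. So the statement you need is true, but it does not follow from gradedness of the associated primes alone; you must add an argument of Krull type (or otherwise control the components inside $V(z)$) to get the second inequality. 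A smaller point: as in Theorem~A, one should assume $\dim S/I\geq 2$ so that $\dim\widehat A\geq 3$ and Theorem~\ref{ThmGraphParameter} applies.
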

\begin{proof}
Let $\eta=(x_1,\ldots, x_n)$.
There exists a vector $w\in\NN^n$ such that $\IN_<(I)=\IN_w(I)$ \cite[Proposition 1.11]{Stu}.
Let $A=K[t]$ be a polynomial ring, $L=\Frac(A)$, and $T=A\otimes_K S$.
We consider $T$ as a graded ring with $\deg(t)=1$, and $\deg(x_i)=w_i$ for every $i\in\NN$.
Given $f=\sum_\alpha c_\alpha x^{\alpha_1}_1\cdots x^{\alpha_n}_n \in S$, we take $f^w=t^{\deg_w f}\sum_\alpha c_\alpha\left(\frac{x_1}{t}\right)^{\alpha_1}\cdots  \left(\frac{x_n}{t}\right)^{\alpha_n}\in T$. 
We set $I^w=(f^w\; | \; f\in I)\subseteq T$ the homogenization of $I$, and $R=T/ I^w$.
Then, $R/tR \cong S/\IN_<(I)$ and  $R/(t-1)R \cong S/I$.
Let $\p_1,\ldots,\p_\ell$ be the minimal primes of $I$.
We note that $\p^w_1,\ldots,\p^w_\ell$ be the minimal primes of $I^w$ \cite[Lemma 2.3]{MV09}.
We note that $\dim T/\p^w_i=\dim S/\p_i+1=\dim S/I +1$ \cite[Lemma 2.3]{MV09}, and so, $I^w$ is equidimensional.
Since  $I^w$ is homogeneous, we have that $\p^w_i$ are also homogeneous. Thus, $\p^w_i+\p^w_j$ and its minimal primes are homogeneous for every $i,j$. We conclude that all these ideals are contained in $\n=(t,x_1,\ldots, x_n)$.
Hence,
\begin{align*}
\#\Gamma_k(S/\IN(I))&=\#\Gamma_k(R/tR) \hbox{ because }S/\IN(I)\cong R/tR\\
&=\#\Gamma_k(\widehat{R_\n}/t\widehat{R_\n})\hbox{ by Remark \ref{RemGrad}}\\
&=\#\Gamma_k(\widehat{R_\n})\hbox{ by Theorem \ref{ThmGraphParameter}}\\
&=\#\Gamma_k(R)\hbox{ by Remark \ref{RemGrad}}
\end{align*}

We now show that $\#\Gamma_k(R)=\# \Gamma_k(S/I)$. We have that 
\begin{align*}
\Ht(\p^w_i+\p^w_j)&\leq \Ht((\p_i+\p_j)^w) \hbox{ because }\p^w_i+\p^w_j\subseteq  (\p_i+\p_j)^w\\
& =\Ht(\p_i+\p_j) \hbox{  \cite[Proof of Lemma 2.3(6)]{MV09}}.
\end{align*}
Let $\overline{R}=R/(t-1)R$. Then, 
\begin{align*}
\Ht(\p_i+\p_j)&=\Ht(\p^w_i\overline{R}+\p^w_j\overline{R})\hbox{ because }S/I\cong \overline{R}\\
&=\Ht(\p^w_i+\p^w_j+(t-1))-1  \hbox{ because }T\hbox{ is a polynomial ring}\\
&\leq \Ht(\p^w_i+\p^w_j)\hbox{ by Krull's principal ideal Theorem.}
\end{align*}
We conclude that $\Ht(\p^w_i+\p^w_j))= \Ht(\p_i+\p_j)$. 
Hence, $\Gamma_k(S/I)\cong\Gamma_k(T/I^w)$.
We conclude $\#\Gamma_k(S/I)=\#\Gamma_k(S/\IN_<(I))$.
\end{proof}

It is known that the graphs $\Gamma_t(S/I)$ may vary after a field extension. Furthermore, the number of connected components might change. For instance, if $S=\RR[x,y]$ and $I=(x^2+y^2)$, $\Gamma_0(S/I)$ has only one vertex and it is connected. In contrast, $\Gamma_0(S/I\otimes_\RR \CC)$ has two vertices and it is disconnected.
As a consequence of Theorem \ref{ThmGroeb}, we obtain that $\# \Gamma_T(S/I)$ does not change when the field is extended if $\IN_<(I)$ is square-free.

\begin{corollary}\label{CorField}
Let $S=K[x_1,\ldots,x_n]$ be a polynomial ring over a field.
Let $I$ be an  equidimensional ideal and $<$ a monomial order such that $\IN_<(I)$ is square-free.
Then, 
$$
\#\Gamma_t(S/I)=\#\Gamma_t(S/I\otimes_K L)
$$
for every field extension $K\subseteq L$.
\end{corollary}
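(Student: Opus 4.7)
My plan is to route the comparison through the square-free monomial ideal $\IN_<(I)$, where the $\Gamma_t$ graph is purely combinatorial and manifestly independent of the base field, and then chain equalities via Theorem \ref{ThmGroeb}. Set $S_L = L\otimes_K S = L[x_1,\ldots,x_n]$ and $I_L = I\cdot S_L$; note that $S/I\otimes_K L \cong S_L/I_L$. Any Gröbner basis of $I$ with respect to $<$ remains a Gröbner basis of $I_L$ with respect to $<$, since Buchberger's $S$-pair reduction criterion is insensitive to enlarging the coefficient field. Consequently $\IN_<(I_L) = \IN_<(I)\cdot S_L$, which is still a square-free monomial ideal.

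Before invoking Theorem \ref{ThmGroeb} on $I_L$, I must check that $I_L$ is equidimensional. Let $\p_1,\ldots,\p_m$ be the minimal primes of $I$, all of height $h=n-\dim(S/I)$. Since $S\to S_L$ is faithfully flat, intersections are preserved, so $I_L = \bigcap_i \p_i S_L$. For each $i$, Noether normalization presents the domain $S/\p_i$ as a module-finite extension of a polynomial $K$-subalgebra of Krull dimension $n-h$; tensoring with $L$ exhibits $S_L/\p_i S_L = (S/\p_i)\otimes_K L$ as module-finite over a polynomial $L$-algebra of the same dimension, and going-up then forces every minimal prime of $\p_i S_L$ to have height exactly $h$. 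Hence every minimal prime of $I_L$ has height $h$, so $I_L$ is equidimensional and Theorem \ref{ThmGroeb} applies.

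With the hypotheses of Theorem \ref{ThmGroeb} checked for both $I$ and $I_L$, that theorem yields
$$\#\Gamma_t(S/I) = \#\Gamma_t(S/\IN_<(I)), \qquad \#\Gamma_t(S_L/I_L) = \#\Gamma_t(S_L/\IN_<(I_L)).$$
It remains to identify the two middle graphs. The minimal primes of a square-free monomial ideal $J\subseteq S$ are exactly the ideals $(x_i : i\in F)$ for certain subsets $F\subseteq\{1,\ldots,n\}$, and literally the same list describes the minimal primes of $J\cdot S_L$; for two such monomial primes, $\Ht\bigl((x_i:i\in F)+(x_j:j\in G)\bigr)=|F\cup G|$ is combinatorial and independent of the ground field. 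Thus $\Gamma_t(S/\IN_<(I))\cong \Gamma_t(S_L/\IN_<(I_L))$, and chaining the three equalities yields the corollary. The only nontrivial obstacle is the equidimensionality verification for $I_L$; the Gröbner stability and the combinatorial identification of the monomial graphs are both essentially automatic once the bookkeeping is in place.
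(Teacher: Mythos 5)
Your proposal is correct and follows essentially the same route as the paper: pass to $\IN_<(I_L)=\IN_<(I)\cdot S_L$ via stability of Gröbner bases under field extension, observe that the $\Gamma_t$ graph of a square-free monomial ideal is combinatorial and hence field-independent, and chain the equalities through Theorem \ref{ThmGroeb} applied on both sides. Your explicit verification that $I_L$ remains equidimensional (via flatness and Noether normalization) is a point the paper's proof leaves implicit, and is a worthwhile addition.
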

\begin{proof}
We observe that the initial ideal of $I\otimes_K L$ is  $\IN_<(I)\otimes_K L$, which is also square-free. 
Since the minimal primes of a square-free monomial ideal, and their sums, are ideals generated by variables, their heights are independent of the field.
Then,
\begin{align*}
\#\Gamma_t(S/I)&=\#\Gamma_t(S/\IN(I)) \hbox{ by Theorem \ref{ThmGroeb}}\\
&=\#\Gamma_t((S/\IN(I))\otimes_K L)\\
&=\#\Gamma_t((S\otimes_K L/\IN(I)\otimes_K L)\\
&=\#\Gamma_t((S\otimes_K L/I\otimes_K L)))  \hbox{ by Theorem \ref{ThmGroeb}}.
\end{align*}
\end{proof}

\section{Applications to Lyubeznik numbers}

Lyubeznik \cite{LyuDmod}  defined numerical inviariants for local rings in equal characteristic. We now recall their definition.

\begin{definition}[{\cite{LyuDmod}}] \label{LyuDef}
Let $(R, \m, K)$ be a local ring containing a field. Then, $\widehat{R}$  admits a surjection,  $\pi:S\to R$, from a regular local ring $(S,\eta,K)$ 
containing a field. 
Let  $n=\dim(S)$ and $I = \ker (\pi).$
The $i,j$-Lyubeznik number of $R$ is defined by
$$
\lambda_{i,j}(R):=\dim_K \Ext^i_S\left(K,H^{n-j}_I (S)\right),
$$
\end{definition}

We recall that the previous numbers  depends only on $R$, $i$, and $j$. In particular,  this number is independent of 
the choice of $S$ and of $\pi$ \cite[Lemma 4.3]{LyuDmod}.

Let $X$ be an equidimensional projective variety of dimension $d$ over a field $K$. 
By choosing an embedding $X \hookrightarrow \PP^n_K$, we can write $X=\Proj(R/I)$, where $I$ is a homogeneous ideal of the polynomial ring 
 $S=K[x_0,\ldots,x_n]$. 
 Let $\m = ( x_0, \ldots, x_n )$ be the homogeneous maximal ideal of $S$, and $R = ( S/I )_\m$ the local ring at the vertex of the affine cone over $X$.  
 
 Lyubeznik asked whether the Lyubeznik numbers $\lambda_{i,j}(R)$ are independent of $n$, and the choice of embedding of $X$ into $ \PP^n_K$, and so, we can write it as $\lambda_{i,j}(X)$. 
The question has been answered affirmatively for all Lyubeznik numbers by Zhang when $K$ has prime characteristic \cite[Theorem 1.1]{WZ-projective}. In contrast, these numbers may vary with the embedding in characteristric zero \cite[Theorem 1]{RSW18}.

The highest Lyubeznik number $\lambda_{d+1, d+1}(R)$ is independent of the choice of embedding  \cite[Theorem 2.7]{Z}, and it is 
well known that  $\lambda_{0,1}(R)$ is as well \cite[Proposition 3.1]{W}.  
It is also know that  $\lambda_{1,2}(R)$ is also an invariant in all characteristics.
These numbers  $\lambda_{0,1}(R), \lambda_{1,2}(R)$, and $\lambda_{d+1, d+1}(R)$
are defined in terms of a geometric version of  $\Gamma_i(R)$.

\begin{definition}
For $X$ an equidimensional projective variety of dimension $d$, given an integer
 $1 \leq t \leq d$, define the graph $\Gamma_t(X)$ as follows:
\begin{enumerate}[label=\textup{(\arabic*)}]
\item The vertices of $\Gamma_t(X)$ are indexed by the irreducible components of $X$,  and 
\item There is an edge between distinct vertices $Z$ and $W$ if and only if 
\[\dim( Z \cap W) \geq d - t.\]
\end{enumerate}
\end{definition}

\begin{theorem}\label{ThmLyuGraphs}
Let $X$ be an equidimensional projective variety of dimension $d$ over a field $K$. 
Then,
\begin{itemize}
\item  $\lambda_{0,1}(X)= \#\Gamma_d (X\otimes_K \overline{K})-1$ \cite[Proposition 3.1]{W};
\item $ \lambda_{1,2}(X)= \#\Gamma_{d-1} (X\otimes_K \overline{K})-\#\Gamma_d (X\otimes_K \overline{K})$ \cite[Theorem 7.4]{NBSW};
\item  $\lambda_{d+1, d+1}(X)=\# \Gamma_2(X\otimes_K \overline{K})$ \cite[Theorem 2.7]{Z};
\end{itemize}
\end{theorem}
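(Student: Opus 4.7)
The plan is to treat the theorem as a compilation of three independent known equalities and verify that, after fixing a dictionary between the projective graphs $\Gamma_t(X)$ defined in this section and the local graphs $\Gamma_t(R)$ from Section 1, each bullet matches one of the cited results. First I would establish the dictionary: fix an embedding $X = \Proj(S/I)$ with $S = K[x_0, \ldots, x_n]$ and $\dim S/I = d+1$, and let $R = (S/I)_{\m}$ be the localization at the homogeneous maximal ideal. The irreducible components of $X$ correspond to the minimal primes of $R$, and since $S/I$ (hence $R$) is equidimensional the standard dimension formula yields
$$ \dim(Z \cap W) = d - \Ht(\p_Z + \p_W) $$
with the convention that an empty intersection has dimension $-1$. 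Consequently $\Gamma_t(X) \cong \Gamma_t(R)$ as abstract graphs, and this identification is preserved after base change to $\overline{K}$.

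Next I would invoke each cited source. For the first bullet, $\#\Gamma_d(X \otimes_K \overline{K})$ counts the connected components of the underlying variety, since edges in $\Gamma_d$ record non-empty intersection; so $\lambda_{0,1}(X) = \#\Gamma_d(X \otimes_K \overline{K}) - 1$ is a direct translation of \cite[Proposition 3.1]{W}, which is a Mayer--Vietoris computation on the lowest nontrivial local cohomology module. The second bullet is \cite[Theorem 7.4]{NBSW} rewritten in the notation used here; its proof combines a spectral sequence argument with a Grothendieck-type connectedness theorem to express $\lambda_{1,2}$ through the passage from $\Gamma_d$ to the refined graph $\Gamma_{d-1}$. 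The third bullet is Zhang's \cite[Theorem 2.7]{Z}, which identifies the highest Lyubeznik number with the number of connected components of the graph whose vertices are the minimal primes of the completion of $R \otimes_K \overline{K}$ and whose edges mark pairs with $\Ht(\p + \q) \leq 2$; under the dictionary above this count equals $\#\Gamma_2(X \otimes_K \overline{K})$.

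The main subtlety in assembling these three bullets is the base change from $K$ to $\overline{K}$: each cited source is stated over an algebraically closed or separably closed field, while the Lyubeznik numbers on the left are defined over $K$. For the specific pairs $(0,1)$, $(1,2)$, and $(d+1,d+1)$ this base-change invariance is known, and it is precisely what licenses evaluating the right-hand sides over $\overline{K}$. I expect that carefully tracking this compatibility is the main (and only genuinely nontrivial) step in the write-up; beyond it the theorem is obtained by quoting the three sources and the graph isomorphism established in the first paragraph.
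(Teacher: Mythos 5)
Your proposal is correct and matches what the paper does: Theorem \ref{ThmLyuGraphs} is stated as a compilation of the three cited results, with no proof given beyond the citations, and the dictionary $\dim(Z\cap W) = d - \Ht(\p_Z+\p_W)$ identifying $\Gamma_t(X)$ with $\Gamma_t((S/I)_\m)$ is exactly the content of \cite[Lemma 7.3]{NBSW}, which the paper invokes when it uses this theorem. Your attention to the base change to $\overline{K}$ is appropriate but already built into the cited statements, so nothing further is needed.
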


As a consequence of Theorem \ref{thm:Main2}, we  provide a way to compute certain Lyubeznik numbers from quare-free initial ideals. This type of questions was previously studied by  Nadi and Varbaro \cite{NV}. In particular, our next theorem extends one of their results  \cite[Proposition 2.11]{NV}.  

\begin{theorem}\label{ThmLyuNum}
Let $S=K[x_1,\ldots,x_n]$ be a polynomial ring over a field.
Let $I$ be an  equidimensional ideal and $<$ a monomial order such that $\IN_<(I)$ is square-free.
Let $X=\Proj(S/I)$ and $Y=\Proj(S/\IN_<(I))$.
Then, 
\begin{itemize}
\item  $\lambda_{0,1}(X)= \lambda_{0,1}(Y)$; 
\item $ \lambda_{1,2}(X)=\lambda_{1,2}(Y) $;
\item  $\lambda_{d+1, d+1}(X)=\lambda_{d+1, d+1}(Y)$.
\end{itemize}
\end{theorem}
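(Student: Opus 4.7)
The plan is to express each of the three Lyubeznik numbers as a graph component count via Theorem \ref{ThmLyuGraphs}, and then to transfer these counts from $X$ to $Y$ by combining Theorem \ref{ThmGroeb}, Corollary \ref{CorField}, and an identification between the projective graph $\Gamma_t(\Proj(S/I))$ and the algebraic graph $\Gamma_t(S/I)$ of Definition \ref{DefGraph}. Throughout I assume $d:=\dim X=\dim(S/I)-1\geq 2$; the low-dimensional cases can be handled separately.

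First I would establish that, for $1\leq t\leq d$, there is a natural isomorphism $\Gamma_t(\Proj(S/I))\cong \Gamma_t(S/I)$. The vertex sets match via $V_+(\p)\leftrightarrow \p/I$, since every minimal prime of $I$ is homogeneous and properly contained in the irrelevant ideal $\eta=(x_1,\ldots,x_n)$. For distinct minimal primes $\p,\q$ with $\p+\q\neq\eta$, the relation $\dim V_+(\p+\q)=\dim S/(\p+\q)-1$ combined with Proposition \ref{pr:QuotientRingGraphEquiv} translates the projective edge condition $\dim V_+(\p+\q)\geq d-t$ into the algebraic condition $\Ht_{S/I}(\p/I+\q/I)\leq t$. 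When $\p+\q=\eta$ no projective edge exists, and on the algebraic side $\Ht_{S/I}(\p/I+\q/I)=d+1>t$, so no phantom edges arise.

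Next I would assemble the chain
\[
\#\Gamma_t(X\otimes_K\overline{K})=\#\Gamma_t\left(\frac{S\otimes_K\overline{K}}{I\otimes_K\overline{K}}\right)=\#\Gamma_t(S/I)=\#\Gamma_t(S/\IN_<(I))=\#\Gamma_t(Y\otimes_K\overline{K})
\]
by using the identification from the previous step at both ends, Corollary \ref{CorField} to descend to $K$ and return to $\overline{K}$, and Theorem \ref{ThmGroeb} for the central equality. This is valid for each $t\in\{2,d-1,d\}$. The three Lyubeznik equalities then drop out of Theorem \ref{ThmLyuGraphs}: take $t=d$ for $\lambda_{0,1}$, $t=2$ for $\lambda_{d+1,d+1}$, and the difference of the identities at $t=d-1$ and $t=d$ for $\lambda_{1,2}$.

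The main obstacle I foresee is pinning down the projective-to-affine graph identification cleanly enough to justify it after base change to $\overline{K}$, especially handling the degenerate case $\p+\q=\eta$; once that is settled, the remainder is a direct assembly of results already established in the paper.
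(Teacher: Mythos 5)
Your proposal is correct and follows essentially the same route as the paper: the paper likewise reduces each Lyubeznik number to a component count via Theorem \ref{ThmLyuGraphs}, identifies $\Gamma_t(X)$ with the graph of the local ring at the cone vertex (citing \cite[Lemma 7.3]{NBSW} rather than reproving it as you sketch), and then invokes Theorem \ref{ThmGroeb}. Your explicit use of Corollary \ref{CorField} to handle the base change to $\overline{K}$ fills in a step the paper leaves implicit, and is a reasonable addition rather than a deviation.
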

\begin{proof}
We have that 
$\Gamma_t(X)=\Gamma((R/I)_\m)$ and $\Gamma_t(Y)=\Gamma((S/\IN_<(I))_\m)$ \cite[Lemma 7.3]{NBSW}.
Then, the result follows from Theorems \ref{ThmGroeb} and \ref{ThmLyuGraphs}.
\end{proof}

\section*{Acknowledgments}

We thank Raul G\'omez M\'u\~{n}oz for  valuable comments.

\bibliographystyle{alpha}
\bibliography{References}

\begin{thebibliography}{NnBWZ16}

\bibitem[CV20]{CV}
Aldo Conca and Matteo Varbaro.
\newblock Square-free groebner degenerations.
\newblock {\em Invent. Math}, 2020.

\bibitem[Har62]{HCT}
Robin Hartshorne.
\newblock Complete intersections and connectedness.
\newblock {\em Amer. J. Math.}, 84:497--508, 1962.

\bibitem[Har68]{HVT}
Robin Hartshorne.
\newblock Cohomological dimension of algebraic varieties.
\newblock {\em Ann. of Math. (2)}, 88:403--450, 1968.

\bibitem[HH94]{HH}
Melvin Hochster and Craig Huneke.
\newblock Indecomposable canonical modules and connectedness.
\newblock In {\em Commutative algebra: syzygies, multiplicities, and birational
  algebra ({S}outh {H}adley, {MA}, 1992)}, volume 159 of {\em Contemp. Math.},
  pages 197--208. Amer. Math. Soc., Providence, RI, 1994.

\bibitem[HL90]{HL}
C.~Huneke and G.~Lyubeznik.
\newblock On the vanishing of local cohomology modules.
\newblock {\em Invent. Math.}, 102(1):73--93, 1990.

\bibitem[KS95]{KS}
Michael Kalkbrener and Bernd Sturmfels.
\newblock Initial complexes of prime ideals.
\newblock {\em Adv. Math.}, 116(2):365--376, 1995.

\bibitem[Lyu93a]{L}
Gennady Lyubeznik.
\newblock Finiteness properties of local cohomology modules (an application of
  {$D$}-modules to commutative algebra).
\newblock {\em Invent. Math.}, 113(1):41--55, 1993.

\bibitem[Lyu93b]{LyuDmod}
Gennady Lyubeznik.
\newblock Finiteness properties of local cohomology modules (an application of
  {$D$}-modules to commutative algebra).
\newblock {\em Invent. Math.}, 113(1):41--55, 1993.

\bibitem[Lyu06]{LSI}
Gennady Lyubeznik.
\newblock On some local cohomology invariants of local rings.
\newblock {\em Math. Z.}, 254(3):627--640, 2006.

\bibitem[NnBSW19]{NBSW}
Luis N\'{u}\~{n}ez Betancourt, Sandra Spiroff, and Emily~E. Witt.
\newblock Connectedness and {L}yubeznik numbers.
\newblock {\em Int. Math. Res. Not. IMRN}, (13):4233--4259, 2019.

\bibitem[NnBWZ16]{SLN}
Luis N\'{u}\~{n}ez Betancourt, Emily~E. Witt, and Wenliang Zhang.
\newblock A survey on the {L}yubeznik numbers.
\newblock In {\em Mexican mathematicians abroad: recent contributions}, volume
  657 of {\em Contemp. Math.}, pages 137--163. Amer. Math. Soc., Providence,
  RI, 2016.

\bibitem[NV19]{NV}
Parvaneh Nadi and Matteo Varbaro.
\newblock Ccm property and lyubeznik numbers under groebner deformations.
\newblock {\em arXiv:1905.12664}, 2019.

\bibitem[Ogu73]{O}
Arthur Ogus.
\newblock Local cohomological dimension of algebraic varieties.
\newblock {\em Ann. of Math. (2)}, 98:327--365, 1973.

\bibitem[RSW18]{RSW18}
Thomas Reichelt, Morihiko Saito, and Uli Walther.
\newblock Dependence of {L}yubeznik numbers of cones of projective schemes on
  projective embeddings.
\newblock preprint https://arxiv.org/pdf/1803.07448.pdf, 2018.

\bibitem[Stu96]{Stu}
Bernd Sturmfels.
\newblock {\em Gr\"{o}bner bases and convex polytopes}, volume~8 of {\em
  University Lecture Series}.
\newblock American Mathematical Society, Providence, RI, 1996.

\bibitem[Var09]{MV09}
Matteo Varbaro.
\newblock Gr\"{o}bner deformations, connectedness and cohomological dimension.
\newblock {\em J. Algebra}, 322(7):2492--2507, 2009.

\bibitem[Wal01]{W}
Uli Walther.
\newblock On the {L}yubeznik numbers of a local ring.
\newblock {\em Proc. Amer. Math. Soc.}, 129(6):1631--1634, 2001.

\bibitem[Zha07]{Z}
Wenliang Zhang.
\newblock On the highest {L}yubeznik number of a local ring.
\newblock {\em Compos. Math.}, 143(1):82--88, 2007.

\bibitem[Zha11]{WZ-projective}
Wenliang Zhang.
\newblock Lyubeznik numbers of projective schemes.
\newblock {\em Adv. Math.}, 228(1):575--616, 2011.

\end{thebibliography}
\end{document}